\newcommand{\N}{\mathbb{N}}
\title{Identifying Self-Conjugate Partitions} 
\author{by Rebecca Odom} 
\date{\today} 
\newtheorem{thm}{Theorem}[section]
\newtheorem{prop}[thm]{Proposition}
\newtheorem{lem}[thm]{Lemma}
\newtheorem{cor}[thm]{Corollary}
\theoremstyle{definition}
\newtheorem{defn}[thm]{Definition}
\newtheorem{nex}{Counterexample}[section]
\theoremstyle{remark}
\newtheorem{ex}{Example}[section]
\begin{document}
\maketitle{ }
\section*{Abstract}
A \textit{partition} of a positive integer $n$ is defined
as a non-increasing sequence $P = [y_0,y_1,...,y_m]$ of positive integers which sum to $n$, where
the $y_i$ are called the \textit{parts} of the partition. A Young diagram is a visual representation of a
partition using rows of boxes, where each row of boxes corresponds to a part. The conjugate partition is similar
to a transpose of a matrix; we switch the rows with columns, or the index of a part with
the part itself. Self-conjugate partitions are partitions that are equal to their conjugate;
previously, the only known way to verify whether a partition is self-conjugate was through the use of
a Young diagram. In this research, by proving preliminary lemmas and theorems about
easily identifiable shapes which are symmetric, we come to the main result: by simply
adding the multiplicities of parts appropriately, we can show whether or not a partition is
self-conjugate without the use of a Young diagram.

\section{Introduction}

The term ``partition" usually means to parse out a mathematical object, such as a set. This idea, however, of parsing an object can also be applied to positive integers. The most natural way to do this is through integer
addition. For example, one can partition the integer  $n=14$ as $14=7+7=13+1=12+2$, and so on. In order to focus solely on the
summands of an integer partition and not the order in which they appear, we require the
summands to appear in descending order. The following is the formal definition of an
integer partition.
\begin{defn}\label{Def}
	A \textit{partition} of a positive integer $n$ is a sequence
	\[P=[y_0, y_1, ..., y_m]\]
	of positive integers where $m$ is a nonnegative integer, $y_0+y_1+...+ y_m=n$, and $y_i\geq y_{i+1}$ for all $0\leq i < m$. Each $y_i$ is called a \textit{part} of $P$, and $n=|P|:=y_0+y_1+...+ y_m$ is called the \textit{size} of $P$.
\end{defn}

Note that most textbooks use parentheses
instead of brackets when describing a partition. In this paper, we use brackets to improve
the readability of our proofs.\\

Below are some examples of partitions.

\begin{ex}
	We can partition the integer $n=7$ in the following fifteen ways:
	\begin{enumerate}
		\item[ ] $[7]:7=7$
		\item[ ] $[6,1]:7=6+1$
		\item[ ] $[5,2]:7=5+2$
		\item[ ] $[5,1,1]:7=5+1+1$
		\item[ ] $[4,3]:7=4+3$
		\item[ ] $[4,2,1]:7=4+2+1$
		\item[ ] $[4,1,1,1]:7=4+1+1+1$
		\item[ ] $[3,3,1]:7=3+3+1$
		\item[ ] $[3,2,2]:7=3+2+2$
		\item[ ] $[3,2,1,1]:7=3+2+1+1$
		\item[ ] $[3,1,1,1,1]:7=3+1+1+1+1$
		\item[ ] $[2,2,2,1]:7=2+2+2+1$
		\item[ ] $[2,2,1,1,1]:7=2+2+1+1+1$
		\item[ ] $[2,1,1,1,1,1]:7=2+1+1+1+1+1$
		\item[ ] $[1,1,1,1,1,1,1]:7=1+1+1+1+1+1+1$
	\end{enumerate}
\end{ex}
There are two ways to visualize a partition that are quite common. The first is a $Young\ diagram$ and the second is a $Ferrers\ diagram$. A Young diagram maps a partition of $n$ to a series of rows of boxes, where each part of the partition becomes a row. A Ferrers diagram is similar, but instead of mapping to rows of boxes, the partition maps to rows of dots or circles. Let us see an example. The graphics provided below (and every other figure to follow in the paper) were created by using \textit{Geogebra} \cite{MH}.

\begin{ex} \label{Ex2}
	Suppose we have the partition $P=[4,3,2,2,1]$ of $n=12$. Below are the two visual representations of $P$: the left diagram is a Young diagram, and the right is a Ferrers diagram.
	
	\begin{figure}[h]
		\centering
		\includegraphics[width=30mm]{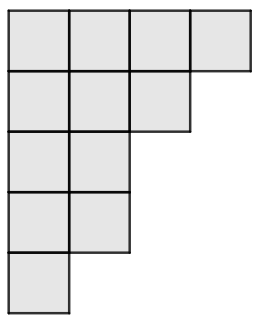}
		\includegraphics[width=4mm]{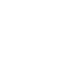}
		\includegraphics[width=29mm]{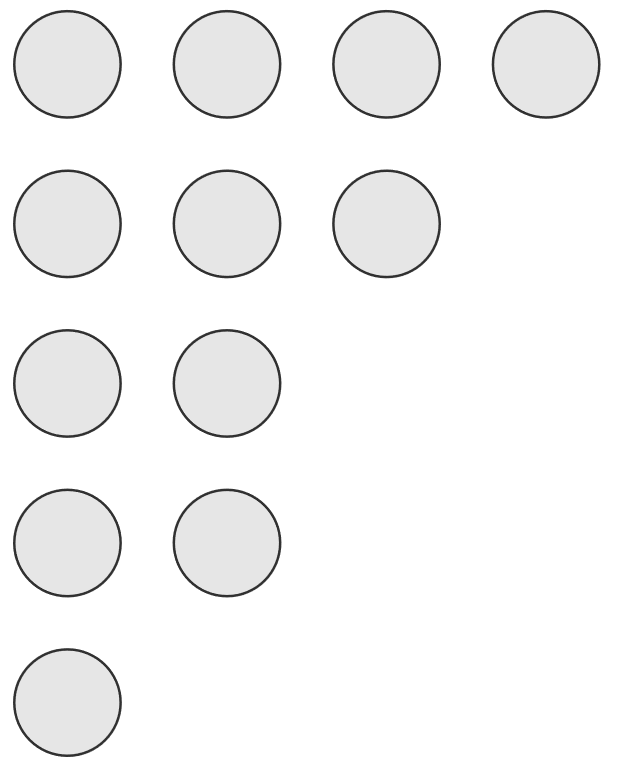}
	\end{figure}
	Notice how each row's number of boxes or dots corresponds to each part in the partition $P$ of $n$.
\end{ex}

\section*{}

There are quite a few things that we can do with partitions. To begin, let us define the partition function $p(n)$.

\begin{defn}
	The partition function $p(n)$ is the number of partitions of $n$.
\end{defn}

Now, $p(n)$ can be defined to be zero when $n<0$, and $p(0)=1$, which references the empty partition with no parts. The next six values of $p(n)$ are as follows, where we will temporarily use an exponent to show repeated values:
\begin{enumerate}
	\item[ ] $p(1)=1\ \to\ 1=[1].$
	\item[ ] $p(2)=2\ \to\ 2=[2],\ 1+1=[1^2].$
	\item[ ] $p(3)=3\ \to\ 3=[3],\ 2+1=[2,1],\ 1+1+1=[1^3]$
	\item[ ] $p(4)=5\ \to\ 4=[4],\ 3+1=[3,1],\ 2+2=[2^2],\ 2+1+1=[2,1^2],\ 1+1+1+1=[1^4].$
	\item[ ] $p(5)=7\ \to\ 5=[5],\ 4+1=[4,1],\ 3+2=[3,2],\ 3+1+1=[3,1^2],\ 2+2+1=[2^2,1],$
	\item[ ] $\ \ \ \ \ \ \ \ \ \ \ \ \ \ \ $ $\ 2+1+1+1=[2,1^3],\ 1+1+1+1+1=[1^5].$
	\item[ ] $p(6)=11\ \to\ 6=[6],\ 5+1=[5,1],\ 4+2=[4,2],\ 4+1+1=[4,1^2],\ 3+3=[3^2],$
	\item[ ] $\ \ \ \ \ \ \ \ \ \ \ \ \ \ \ \ \ \ $ $ 3+2+1=[3,2,1],\ 3+1+1+1=[3,1^3],\ 2+2+2=[2^3],$
	\item[ ] $\ \ \ \ \ \ \ \ \ \ \ \ \ \ \ \ \ \ \ \ \ \ \ $ $2+2+1+1=[2^2,1^2],\ 2+1+1+1+1=[2,1^4],\ 1+1+1+1+1+1=[1^6].$
\end{enumerate}

This function grows rapidly in value; for example \cite{Book}, ``$p(10)=42$, $p(20)=627$, $p(50)=204226$, $p(100)=190569292$, and $p(200)=3972999029388$." In fact, this leads us to the historical collaboration between G. H. Hardy and S. Ramanujan, who together showed what the value of $p(n)$ would be for any $n$ \cite{Paper}:

\begin{thm}
	The value of $p(n)$ is the integer nearest
	\[\frac{1}{2\sqrt{2}}\sum_{q=1}^{v}\sqrt{q}A_q(n)\psi_q(n),\]
	where $A_q(n)=\sum\omega_{p,q}e^{-2n\rho\pi i/q}$, the sum is over $p<q$ with $\gcd(p,q)=1$, $\omega_{p,q}$ is a certain 24$q$th root of unity, $v$ is of the order of $\sqrt{n}$, and
	\[\psi_q(n)=\frac{d}{dn}\left(\emph{exp}\left(\frac{C\sqrt{\left(n-\frac{1}{24}\right)}}{q}\right)\right),\ C=\pi\sqrt{\frac{2}{3}}\]
\end{thm}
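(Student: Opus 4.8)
The plan is to derive this formula by the Hardy--Ramanujan circle method applied to the generating function of $p(n)$. The starting point is the Euler product
\[
F(x)=\sum_{n=0}^{\infty}p(n)x^n=\prod_{k=1}^{\infty}\frac{1}{1-x^k},
\]
valid for $|x|<1$, together with Cauchy's integral formula, which recovers each coefficient as
\[
p(n)=\frac{1}{2\pi i}\oint_{|x|=r}\frac{F(x)}{x^{n+1}}\,dx
\]
for any radius $0<r<1$. The entire difficulty is concentrated in estimating this integral, because $F(x)$ blows up violently as $x$ approaches the unit circle, and it does so most severely at roots of unity.

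The key analytic input is the modular behaviour of $F$. Writing $x=e^{2\pi i\tau}$, the product $F$ is essentially the reciprocal of the Dedekind eta function via $\eta(\tau)=x^{1/24}/F(x)$, and the transformation $\eta(-1/\tau)=\sqrt{-i\tau}\,\eta(\tau)$ controls $F(x)$ near every rational point $p/q$ on the unit circle. Concretely, near $x=e^{2\pi i p/q}$ one substitutes the modular transformation to convert the wild singularity into a factor of the shape $e^{C/(q\varepsilon)}$ for small $\varepsilon$, where $C=\pi\sqrt{2/3}$; this is the source of the constant $C$ in the statement and of the dominant growth $e^{C\sqrt{n}}$ of $p(n)$. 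The $-1/24$ shift appearing in $\psi_q$ originates in the prefactor $x^{1/24}$ relating $F$ to $\eta$.

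I would then carry out a Farey dissection of the circle $|x|=r$ with $r=1-1/n$, partitioning the contour into arcs, one surrounding each fraction $p/q$ with $q\le v$ for $v$ of order $\sqrt{n}$. On the arc around $p/q$, after the modular substitution the integral reduces to one evaluable in closed form as a derivative of an exponential, producing exactly the factor $\psi_q(n)=\tfrac{d}{dn}\exp\!\bigl(C\sqrt{n-1/24}\,/q\bigr)$, while the accumulated phases from the roots of unity assemble into the Kloosterman-type sum $A_q(n)=\sum_{p}\omega_{p,q}e^{-2\pi i np/q}$. Summing these major-arc contributions produces the main term $\frac{1}{2\sqrt2}\sum_{q\le v}\sqrt{q}\,A_q(n)\psi_q(n)$.

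The main obstacle, and the technical heart of the proof, is the error analysis: bounding the contribution of the minor arcs between the Farey fractions and the tail of the major-arc expansion, and showing that the total error is smaller than $\tfrac{1}{2}$ once we truncate at $v\asymp\sqrt n$. This requires careful, uniform estimates on $F(x)$ away from the dominant singularities, exploiting that the growth at $p/q$ weakens as $q$ increases, so that higher-$q$ terms contribute progressively less. Establishing that the cumulative error stays strictly below $\tfrac{1}{2}$ is precisely what justifies the conclusion that the displayed sum, rounded to the nearest integer, equals $p(n)$.
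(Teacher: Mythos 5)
There is nothing in the paper to compare your argument against: this theorem is quoted from Hardy and Ramanujan (the paper cites \cite{Paper} and \cite{Book}) and the author explicitly states that ``the proofs for both formulas are quite lengthy, so we will omit them.'' So the question is only whether your proposal would stand on its own. Your outline does identify the historically correct strategy --- Cauchy's integral formula applied to Euler's product, the modular transformation of $\eta(\tau)$ to control $F(x)$ near each root of unity $e^{2\pi i p/q}$ (which is indeed where the constant $C=\pi\sqrt{2/3}$ and the $-\tfrac{1}{24}$ shift come from), a Farey dissection into arcs indexed by $q\le v$ with $v$ of order $\sqrt{n}$, and the assembly of the phases into $A_q(n)$.

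However, as written this is a roadmap rather than a proof. Every step that actually makes the theorem true is deferred: you do not carry out the modular substitution on a given arc to verify that the resulting integral really evaluates to $\tfrac{1}{2\sqrt2}\sqrt{q}\,\psi_q(n)$ with the stated normalization; you do not specify the Farey order, the arc lengths, or the radius beyond $r=1-1/n$; and, most importantly, the minor-arc and truncation error bounds --- the ``technical heart'' by your own description, and the only thing that justifies the phrase ``the integer nearest'' --- are only asserted to be achievable. (Strictly, Hardy and Ramanujan prove the error is $O(n^{-1/4})$, so the nearest-integer conclusion holds for $n$ sufficiently large; your claim that the error is ``smaller than $\tfrac12$'' for all $n$ would itself need justification.) Since the paper deliberately omits this proof precisely because of its length, a credited proof here would require executing those estimates, not naming them.
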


This formula was later refined by H. Rademacher to be \cite{Third}:

\begin{thm}
	\[p(n)=\frac{1}{\pi\sqrt{2}}\sum_{k=1}^{\infty}A_k(n)k^{\frac{1}{2}}\left[\frac{d}{dx}\left(\frac{\sinh((\pi/k)(2/3\left(x-1/24\right))^\frac{1}{2})}{\left(x-1/24\right)^\frac{1}{2}}\right)\right]_{x=n},\]
	
	where
	
	\[A_k(n)=\sum_{h\ \emph{mod}\ k}^{\ }\omega_{h,k}e^{-2\pi i n h/k},\ \gcd(h,k)=1\ for\ h\ \emph{mod}\ k\]
\end{thm}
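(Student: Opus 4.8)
The plan is to establish this exact formula by the Hardy--Ramanujan--Rademacher circle method, starting from the generating function for $p(n)$. First I would record the identity
\[
F(x) := \sum_{n=0}^{\infty} p(n)\,x^n = \prod_{m=1}^{\infty}\frac{1}{1-x^m}, \qquad |x|<1,
\]
which follows by expanding each factor $1/(1-x^m)=\sum_{j\ge 0}x^{mj}$ and collecting coefficients. By Cauchy's integral formula,
\[
p(n) = \frac{1}{2\pi i}\oint_C \frac{F(x)}{x^{\,n+1}}\,dx,
\]
where $C$ is a circle of radius slightly less than $1$. The substitution $x=e^{2\pi i\tau}$ with $\tau$ in the upper half-plane identifies $F$ with the Dedekind eta function via $\prod_{m\ge 1}(1-x^m)=e^{-\pi i\tau/12}\eta(\tau)$, so that $F(x)=e^{\pi i\tau/12}/\eta(\tau)$. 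This is the gateway to modular transformation theory, which controls the behavior of $F$ near the boundary of the disk.

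The heart of the argument is the Ford-circle dissection of the contour. The singularities of $F$ accumulate at every root of unity $e^{2\pi i h/k}$, and each fraction $h/k$ contributes to the integral. I would parametrize $C$ by the Ford circles of radius $1/(2k^2)$ tangent to the real $\tau$-axis at each fraction in the Farey sequence of order $N$, cutting the contour into one arc per fraction. On the arc near $h/k$ I apply the eta transformation law: for $\gamma=\left(\begin{smallmatrix}a&b\\c&d\end{smallmatrix}\right)\in SL_2(\mathbb{Z})$ with $c>0$,
\[
\eta\!\left(\frac{a\tau+b}{c\tau+d}\right) = \varepsilon(\gamma)\,\{-i(c\tau+d)\}^{1/2}\,\eta(\tau),
\]
where $\varepsilon(\gamma)$ is a $24$th-root-of-unity-valued multiplier expressible through the Dedekind sum $s(d,c)$. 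Choosing $\gamma$ so that $h/k$ is sent to $i\infty$ converts the wild behavior of $F$ near $h/k$ into the tame behavior $\eta(\tau')\sim e^{\pi i\tau'/12}$ near the cusp, and the multiplier specializes to the $24k$th root of unity $\omega_{h,k}$ appearing in $A_k(n)$.

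After the transformation, the dominant part of each arc integral reduces to a Hankel-type contour integral that evaluates exactly to a modified Bessel function $I_{3/2}$. Using the elementary closed form
\[
I_{3/2}(z) = \sqrt{\tfrac{2}{\pi z}}\left(\cosh z - \frac{\sinh z}{z}\right),
\]
the contribution of the fraction $h/k$ collapses precisely to the bracketed $\sinh$-derivative in the theorem, carrying the phase $\omega_{h,k}\,e^{-2\pi i n h/k}$. Summing over the residues $h$ coprime to $k$ assembles the Kloosterman-type sum $A_k(n)$, and summing over $k=1,\dots,N$ produces the partial sums of the series; letting $N\to\infty$ and showing the error terms vanish yields the stated convergent identity.

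The hard part will be twofold. First, establishing the eta transformation with the correct multiplier system --- proving that the root of unity is exactly $\omega_{h,k}=\exp(\pi i\,s(h,k))$ --- requires the reciprocity law for Dedekind sums together with careful tracking of the branch of the square root; this is where the arithmetic enters. Second, and more delicate, is the analytic estimate that the aggregate error over all Farey arcs is $o(1)$ as $N\to\infty$, so that the merely asymptotic Hardy--Ramanujan expansion sharpens into Rademacher's exact equality. This demands uniform bounds on $\eta(\tau')$ away from the cusp together with length estimates for the Ford-circle arcs, and it is precisely here that Rademacher's choice of Ford circles, rather than straight Farey arcs, is essential for exact convergence.
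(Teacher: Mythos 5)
The paper does not actually prove this theorem: it is quoted from Rademacher with the remark that ``the proofs for both formulas are quite lengthy, so we will omit them,'' so there is no in-paper argument to compare yours against. Your outline follows the canonical route --- indeed essentially the only known one, and the one Rademacher himself took: the generating function $\prod_{m\geq 1}(1-x^m)^{-1}$, Cauchy's integral formula, dissection of the contour along Ford circles indexed by Farey fractions, the Dedekind eta transformation law to tame $F$ near each root of unity $e^{2\pi i h/k}$, reduction of each arc to a Hankel-type integral evaluating to $I_{3/2}$, and assembly of the Kloosterman-type sums $A_k(n)$. The intermediate objects you name are all the right ones, and your closed form for $I_{3/2}$ is correct; the $\sinh$-derivative in the statement is exactly the $I_{3/2}$ term in disguise.

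That said, what you have written is a road map, not a proof. The two items you defer as ``the hard part'' --- identifying the multiplier as the $24k$th root of unity $\omega_{h,k}$ via Dedekind-sum reciprocity with the correct branch of the square root, and proving that the aggregate error over all Ford-circle arcs tends to $0$ as the Farey order $N\to\infty$ --- are not loose ends but the entire mathematical content separating Rademacher's exact convergent series from the Hardy--Ramanujan asymptotic expansion. Nothing in the proposal indicates how those estimates would actually be carried out (for instance, even the convergence of the outer sum over $k$ needs to be justified by combining the trivial bound $|A_k(n)|\leq k$ with the $O(k^{-5/2})$ decay of $k^{1/2}$ times the Bessel term, and the error analysis needs uniform bounds on the transformed eta function away from the cusp together with length estimates for the arcs). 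So the proposal correctly locates every landmark of the proof but builds none of the road between them; as it stands it could not be checked for correctness, only for plausibility.
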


An example of using this second formula for $p(200)$ is as follows, where we compute the first eight terms:

\begin{enumerate}
	\item[ ] +3,972,998,993,185.896
	\item[ ] $\ \ \ \ \ \ \ \ \ \ \ \ \ \ $ +36,282.978
	\item[ ] $\ \ \ \ \ \ \ \ \ \ \ \ \ \ \ \ \ \ \ \ $ -87.555
	\item[ ] $\ \ \ \ \ \ \ \ \ \ \ \ \ \ \ \ \ \ \ \ $ +5.147
	\item[ ] $\ \ \ \ \ \ \ \ \ \ \ \ \ \ \ \ \ \ \ \ \ $+1.424
	\item[ ] $\ \ \ \ \ \ \ \ \ \ \ \ \ \ \ \ \ \ \ \ \ $+0.071
	\item[ ] $\ \ \ \ \ \ \ \ \ \ \ \ \ \ \ \ \ \ \ \ \ $+0.000
	\item[ ] $\ \ \ \ \ \ \ \ \ \ \ \ \ \ \ \ \ \ \ \ \ $+0.043
	\item[ ] \----\----\----\----\----\----\----\----\----\----
	\item[ ] $\ \ $ 3,972,99,029,388.004
\end{enumerate}

which is, within .004, the correct value for $p(200)$ \cite{Book}. However, the proofs for both formulas are quite lengthy, so we will omit them. Continuing on, the same Ramanujan and Hardy also discovered divisibility properties concerning $p(n)$. Ten of these divisibility properties are listed below \cite{Book}:

\begin{enumerate}
	\item[(1)] $p(4),\ p(9),\ p(14),\ p(19), ... \equiv 0\  ($mod$\ 5)$
	\item[(2)] $p(5),\ p(12),\ p(19),\ p(26), ... \equiv 0\  ($mod$\ 7)$
	\item[(3)] $p(6),\ p(17),\ p(28),\ p(39), ... \equiv 0\  ($mod$\ 11)$
	\item[(4)] $p(24),\ p(49),\ p(74),\ p(99), ... \equiv 0\  ($mod$\ 25)$
	\item[(5)] $p(19),\ p(54),\ p(89),\ p(124), ... \equiv 0\  ($mod$\ 35)$
	\item[(6)] $p(47),\ p(96),\ p(145),\ p(194), ... \equiv 0\  ($mod$\ 49)$
	\item[(7)] $p(39),\ p(94),\ p(149), ... \equiv 0\  ($mod$\ 55)$
	\item[(8)] $p(61),\ p(138), ... \equiv 0\  ($mod$\ 77)$
	\item[(9)] $p(116), ... \equiv 0\  ($mod$\ 121)$
	\item[(10)] $p(99), ... \equiv 0\  ($mod$\ 125)$
\end{enumerate}

This leads to Ramanujan's conjecture of the following theorem:
\begin{thm}
	If $\delta=5^a7^b11^c$ and $24\lambda\ \equiv \ 1\  (\emph{mod}\ \delta)$, then
	\[p(\lambda),\ p(\lambda+\delta),\ p(\lambda+2\delta),\ ...\ \equiv\ 0\ (\emph{mod}\ \delta).\]
\end{thm}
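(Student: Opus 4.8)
The plan is to recognize this as Ramanujan's congruence conjecture and to reduce it, via the multiplicative structure of $\delta = 5^a 7^b 11^c$, to three independent prime-power statements. Since $5$, $7$, and $11$ are pairwise coprime, the hypothesis $24\lambda \equiv 1 \pmod{\delta}$ is equivalent to the three congruences $24\lambda \equiv 1$ modulo $5^a$, $7^b$, and $11^c$ separately, so by the Chinese Remainder Theorem it suffices to prove
\[ p(\lambda) \equiv 0 \pmod{5^a}, \quad p(\lambda) \equiv 0 \pmod{7^b}, \quad p(\lambda) \equiv 0 \pmod{11^c} \]
each under its own prime-power hypothesis; multiplying the three then recovers divisibility by $\delta$. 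The tool I would use throughout is the generating function
\[ \sum_{n=0}^{\infty} p(n) q^n = \prod_{k=1}^{\infty} \frac{1}{1-q^k}, \]
which I would study as the reciprocal of a power of the Dedekind eta function, so that divisibility of the coefficients $p(n)$ becomes a statement about the reduction of a modular form modulo a prime.

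For the base cases $a, b, c \le 1$ I would follow Ramanujan's original $q$-series method. First I would establish the dissection identity
\[ \sum_{n=0}^{\infty} p(5n+4)\, q^n = 5\,\frac{(q^5;q^5)_\infty^5}{(q;q)_\infty^6}, \]
together with the analogous two-term identity for $p(7n+5)$, by manipulating the Euler product and the Jacobi triple product; the explicit factor of $5$ (respectively $7$) appearing on the right-hand side makes the divisibility immediate. The passage from a single prime to its higher powers is where the elementary approach runs out: there I would move to the theory of modular forms on the relevant congruence subgroups and use the action of Hecke operators to set up an induction on the exponent, following Watson for the powers of $5$ and $7$ and Atkin for the powers of $11$.

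The hard part is genuinely twofold. First, no elementary argument of the kind developed earlier in this paper reaches the higher-power cases; the induction on exponents rests on deep properties of modular forms and Hecke eigenvalues, so a self-contained proof is out of reach and I would have to cite Watson and Atkin for those steps. Second, and more seriously, the statement as written is not correct: the conjecture holds verbatim for powers of $5$ and $11$, but the power of $7$ must be weakened, with $7^b$ replaced by $7^{\lfloor b/2 \rfloor + 1}$. The smallest failure occurs at $b = 3$: taking $\lambda = 243$ gives $24\lambda = 5832 \equiv 1 \pmod{7^3}$, yet $p(243)$ is divisible by $7^2 = 49$ but not by $7^3 = 343$. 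The honest outcome of this plan is therefore a proof of the \emph{corrected} theorem, and I would flag the exponent on $7$ as precisely the point at which Ramanujan's conjecture, and any proof that treats the three primes symmetrically, must be amended.
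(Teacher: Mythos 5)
You are right to refuse to prove this statement as written: it is false, and the paper itself offers no proof, presenting it only as Ramanujan's conjecture and immediately citing the very counterexample you identified, namely Chowla's observation that $p(243)=133978259344888\not\equiv 0\pmod{7^3}$ even though $24\cdot 243\equiv 1\pmod{343}$. Your corrected exponent $7^{\lfloor b/2\rfloor+1}$ is exactly the $7^{[(b+2)/2]}$ appearing in Atkin's theorem as stated later in the paper, and your outline --- Chinese Remainder reduction to the three prime powers, Ramanujan's dissection identities such as $\sum_{n\geq 0}p(5n+4)q^n=5\,(q^5;q^5)_\infty^5/(q;q)_\infty^6$ for the base cases, then Watson and Atkin for the higher powers --- is the standard route to that corrected result; as you note, it lies far beyond the elementary methods of this paper and those steps would have to be cited rather than proved. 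In short, your diagnosis matches the paper's own treatment: this is a refuted conjecture recorded for historical context, not a theorem admitting a proof.
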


However, a counterexample to this conjecture was made by S. Chowla in the 1930s \cite{Book}. Specifically, he noticed that
\[p(243)=133978259344888\ \not\equiv\ 0\ (\textrm{mod}\ 7^3).\]
Though his conjecture was incorrect, Ramanujan did provide proofs (or sketches of proofs) for
\[p(5n+4)\ \equiv\ 0\ (\textrm{mod}\ 5),\]
\[p(7n+5)\ \equiv\ 0\ (\textrm{mod}\ 7),\]
\[p(25n+24)\ \equiv\ 0\ (\textrm{mod}\ 25),\]
\[p(49n+47)\ \equiv\ 0\ (\textrm{mod}\ 49).\]

Furthermore, in 1938, G. N. Watson showed that a ``modified version of the conjecture was true for all powers of 5 and 7" \cite{Book}. Later, in 1967 A. O. L. Atkin gave proof for the true, full modified conjecture:

\begin{thm}
	If $\delta=5^a7^b11^c$ and $24\lambda\ \equiv \ 1\  (\emph{mod}\ \delta)$, then $p(\lambda) \ \equiv \ 0 \ (\emph{mod}\ 5^a7^{[(b+2)/2]}11^c)$.
\end{thm}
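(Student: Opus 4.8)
The plan is to treat this as what it is: the full modified Ramanujan conjecture, first established in this generality by Atkin, whose proof lies entirely outside the elementary combinatorial world of the rest of this paper and instead rests on the theory of modular forms and Hecke operators. The first move is to make $p(n)$ analytic. Writing $q=e^{2\pi i\tau}$, the identity $\sum_{n\geq 0}p(n)q^{n}=\prod_{k\geq 1}(1-q^{k})^{-1}$ shows that $\sum_{n\geq 0}p(n)q^{24n-1}=1/\eta(24\tau)$, where $\eta$ is the Dedekind eta function. Since $\eta$ is a modular form of weight $\tfrac12$, the reciprocal $1/\eta$ is a (meromorphic) modular form, and the hypothesis $24\lambda\equiv 1\pmod{\delta}$ becomes exactly the statement that we are extracting the coefficients of $1/\eta$ along a fixed arithmetic progression modulo $\delta$.

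Next I would split the problem prime by prime. Because $\delta=5^{a}7^{b}11^{c}$ with $5,7,11$ pairwise coprime, and the target modulus $5^{a}7^{[(b+2)/2]}11^{c}$ factors over the very same primes, the Chinese Remainder Theorem reduces the claim to three independent divisibility statements: $p(\lambda)\equiv 0$ modulo $5^{a}$, modulo $7^{[(b+2)/2]}$, and modulo $11^{c}$. The engine for isolating a progression of coefficients is the Atkin--Lehner operator $U_{p}$, which carries $\sum a_{n}q^{n}$ to $\sum a_{pn}q^{n}$ and preserves a suitable finite-dimensional space of modular forms of fixed level. Iterating $U_{p}$ produces a sequence of sieved forms $f_{0},f_{1},f_{2},\dots$ that records $p(\lambda)$ as the exponent on $p$ increases; since all the $f_{j}$ lie in one finite-dimensional space, this sequence obeys a linear recurrence, and the entire problem collapses to tracking the $p$-adic valuation of its coefficients through that recurrence.

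For $p=5$ and $p=11$ the relevant space is essentially one-dimensional, so the recurrence is a simple scaling in which each application of $U_{p}$ contributes one full factor of $p$. An induction on the exponent then yields divisibility by $5^{a}$ and by $11^{c}$, which is Ramanujan's original argument for $p(5n+4)\equiv 0\pmod 5$ and $p(7n+5)\equiv 0\pmod 7$ carried through all powers. I would dispatch these two primes first, since they set the template for the genuinely hard case.

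The hard part will be $p=7$. Here the associated space of modular forms is two-dimensional, so the recurrence governing the $f_{j}$ is a true second-order linear recurrence rather than a scaling, and the $7$-adic valuation of the coefficients climbs by one only on every other step. This is precisely what forces the exponent $[(b+2)/2]$ in place of the naive $b$, and it is why Ramanujan's earlier (false) version of the theorem fails: Chowla's value $p(243)=133978259344888$, which satisfies $24\cdot 243\equiv 1\pmod{7^{3}}$, is divisible by $7^{2}$ but not by $7^{3}$, matching $[(3+2)/2]=2$ exactly. Making the half-rate valuation growth rigorous requires a careful study of the filtration of modular forms modulo $7$ and of the eigenvalue structure of $U_{7}$ on the two-dimensional space; this analysis, initiated by Watson for prime powers and completed by Atkin, is the technical core, and I expect it to absorb essentially all of the real work.
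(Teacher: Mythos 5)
The paper does not prove this statement at all: it is quoted as historical background, attributed to Atkin (1967) and cited from Andrews' \emph{The Theory of Partitions}, with no argument given. So there is no proof in the paper to compare yours against; your sketch has to stand on its own. As a sketch it correctly names the standard machinery --- passing to $1/\eta(24\tau)$, splitting by the Chinese Remainder Theorem over the primes $5$, $7$, $11$, and iterating the operator $U_p$ on a finite-dimensional space to get a linear recurrence whose $p$-adic valuations you then track. But it is an outline, not a proof: you explicitly defer ``essentially all of the real work'' to the filtration and eigenvalue analysis, which is exactly the content of the theorem.

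More seriously, you have the difficulty located at the wrong prime. You claim $p=5$ and $p=11$ are the easy, essentially one-dimensional cases and that $p=7$ is the hard case ``completed by Atkin.'' The history (recounted in the paper itself) and the mathematics both say otherwise: Watson (1938) proved the modified conjecture for \emph{all} powers of $5$ and $7$ --- including the half-rate exponent $[(b+2)/2]$ at $7$, which was already understood then --- and the case that remained open until Atkin was powers of $11$. That case is the hard one precisely because the relevant space of forms for level $11$ is \emph{not} essentially one-dimensional (the modular curve $X_0(121)$ has positive genus), so your proposed easy induction ``each application of $U_{11}$ contributes one full factor of $11$'' does not go through as stated; this is where Atkin's new techniques were needed. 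There is also a small slip where you cite $p(7n+5)\equiv 0\pmod 7$ as the template for the primes $5$ and $11$; the relevant congruence for $11$ is $p(11n+6)\equiv 0\pmod{11}$. As written, the plan would dispatch the genuinely hard prime with an argument that fails for it.
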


Clearly, the value of $p(n)$ is more complicated than it first appears. Let us now consider what happens when we consider subsets of partitions \cite{Book}:

	Let $\varphi$ denote the set of all partitions, and let $p(S,n)$ denote the number of partitions of $n$ that belong to a subset $S\subseteq\varphi$. Furthermore, let $\mathcal{O}$ denote the set of all partitions with only odd parts and $\mathcal{D}$ denote the set of all partitions with distinct parts. Below we list partitions in $\mathcal{O}$ and $\mathcal{D}$ where we will, again, let an exponential represent the multiplicity of a part.

\begin{enumerate}
	\item[ ] $p(\mathcal{O},1)=1\ \to\ 1=[1].$
	\item[ ] $p(\mathcal{O},2)=1\ \to\ 1+1=[1^2].$
	\item[ ] $p(\mathcal{O},3)=2\ \to\ 3=[3],\ 1+1+1=[1^3].$
	\item[ ] $p(\mathcal{O},4)=2\ \to\ 3+1=[3,1],\ 1+1+1+1=[1^4].$
	\item[ ] $p(\mathcal{O},5)=3\ \to\ 5=[5],\ 3+1+1=[3,1^2],\ 1+1+1+1+1=[1^5].$
	\item[ ] $p(\mathcal{O},6)=4\ \to\ 5+1=[5,1],\ 3+3=[3^2],\ 3+1+1+1=[3,1^3],\ 1+1+1+1+1+1=[1^6].$
	\item[ ] $p(\mathcal{O},7)=5\ \to\ 7=[7],\ 5+1+1=[5,1^2],\ 3+3+1=[3^2,1],\ 1+1+1+1+1+1+1=[1^7].$
	\item[ ] $p(\mathcal{D},1)=1\ \to\ 1=[1].$
	\item[ ] $p(\mathcal{D},2)=1\ \to\ 2=[2].$
	\item[ ] $p(\mathcal{D},3)=2\ \to\ 3=[3],\ 2+1=[2,1].$
	\item[ ] $p(\mathcal{D},4)=2\ \to\ 4=[4],\ 3+1=[3,1].$
	\item[ ] $p(\mathcal{D},5)=3\ \to\ 5=[5],\ 4+1=[4,1],\ 3+2=[3,2].$
	\item[ ] $p(\mathcal{D},6)=4\ \to\ 6=[6],\ 5+1=[5,1],\ 4+2=[4,2],\ 3+2+1=[3,2,1].$
	\item[ ] $p(\mathcal{D},7)=5\ \to\ 7=[7],\ 6+1=[6,1],\ 5+2=[5,2],\ 4+3=[4,3],\ 4+2+1=[4,2,1].$
\end{enumerate}

Consider the fact that, for $n\leq7$, $p(\mathcal{O},n)=p(\mathcal{D},n)$. In fact, let us prove that this is true for all $n$. First, let us define generating functions for sequences and notation for subsets of positive integers:

\begin{defn}
	The \textit{generating function} $f(q)$ for the sequence $a_0$, $a_1$, $a_2$, $a_3$, ... is the power series $f(q)=\sum_{n\geq0}^{}a_nq^n$.
\end{defn}

\begin{defn}
	Let $\mathcal{H}$ be a set of positive integers. We let ``$\mathcal{H}$" denote the set of all partitions whose parts lie in $\mathcal{H}$. Consequently, $p(``\mathcal{H}",n)$ is the number of partitions of $n$ that have all their parts in $\mathcal{H}$. Thus, if $\mathcal{H}_0$ is the set of all odd positive integers, then ``$\mathcal{H}_0$"$=\mathcal{O}$. As in, $p($``$\mathcal{H}_0$"$,n)=p(\mathcal{O},n)$.
\end{defn}

\begin{defn}
	Let $\mathcal{H}$ be a set of positive integers. We let ``$\mathcal{H}$"$(\leq d)$ denote the set of all partitions in which no part appears more than $d$ times and each part is in $\mathcal{H}$. Thus, if $\mathbb{N}$ is the set of all positive integers, then  $p($``$\mathbb{N}$"$(\leq1),n)=p(\mathcal{D},n)$.
\end{defn}

This leads us to a theorem and its proof \cite{Book}.

\begin{thm}
	Let $\mathcal{H}$ be a set of positive integers, and let
	\[f(q)=\sum_{n\geq0}^{}p(``H",n)q^n,\]
	\[f_d(q)=\sum_{n\geq0}^{}p(``H"(\leq d),n)q^n.\]
	Then for $|q|<1$
	\begin{equation}
		f(q)=\prod_{n\in\mathcal{H}}^{}(1-q^n)^{-1}
	\end{equation}
	\[f_d(q)=\prod_{n\in\mathcal{H}}^{}(1+q^n+\dots+q^{dn})\]
	\begin{equation}
		\ \ \ \ \ \ \ \ \ \ =\prod_{n\in\mathcal{H}}^{}(1-q^{(d+1)n})(1-q^n)^{-1}
	\end{equation}
\end{thm}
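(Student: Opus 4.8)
The plan is to give a combinatorial reading of each factor in the product and then compare the coefficient of $q^N$ on both sides of each claimed identity. For equation (1), I would first expand each factor as a geometric series,
\[(1-q^n)^{-1}=\sum_{a\geq 0}q^{an},\]
which is legitimate since $|q^n|\leq|q|<1$. The term $q^{an}$ records the decision that the part $n$ occurs exactly $a$ times. Formally expanding the product $\prod_{n\in\mathcal{H}}(1-q^n)^{-1}$, every monomial that appears is obtained by selecting a nonnegative multiplicity $a_n$ for each $n\in\mathcal{H}$ (with all but finitely many $a_n=0$), and it contributes $q^{\sum_n a_n n}$. Hence the coefficient of $q^N$ counts the number of ways to write $N=\sum_{n\in\mathcal{H}}a_n n$, which is precisely the number of partitions of $N$ whose parts all lie in $\mathcal{H}$, namely $p(``\mathcal{H}",N)$. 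Since this agrees with the coefficient of $q^N$ in $f(q)$ for every $N$, equation (1) follows.

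Before the coefficient comparison is valid, I need to address the point that the product is infinite when $\mathcal{H}$ is infinite. The resolution is a finiteness observation: only parts $n\leq N$ can contribute to the coefficient of $q^N$ (a part exceeding $N$ cannot appear in a partition of $N$), and each such part may appear at most $N$ times. Therefore the coefficient of any fixed $q^N$ depends on only finitely many factors, so the term-by-term expansion and the coefficient matching are justified; equivalently, the product converges absolutely for $|q|<1$.

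For the first equality in equation (2), the argument is identical except that each multiplicity is now capped at $d$. This replaces the full geometric series by the truncated sum $1+q^n+\dots+q^{dn}$, whose term $q^{an}$ ranges only over $0\leq a\leq d$. Expanding $\prod_{n\in\mathcal{H}}(1+q^n+\dots+q^{dn})$ and extracting the coefficient of $q^N$ then counts exactly the partitions of $N$ with parts in $\mathcal{H}$ in which no part appears more than $d$ times, that is, $p(``\mathcal{H}"(\leq d),N)$, giving $f_d(q)=\prod_{n\in\mathcal{H}}(1+q^n+\dots+q^{dn})$.

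The second equality is purely algebraic. For each fixed $n$, the finite geometric series identity gives
\[(1-q^n)(1+q^n+\dots+q^{dn})=1-q^{(d+1)n},\]
so $1+q^n+\dots+q^{dn}=(1-q^{(d+1)n})(1-q^n)^{-1}$. Taking the product over all $n\in\mathcal{H}$ produces the closed form in equation (2). The main obstacle is not any deep computation but the rigorous justification of coefficient extraction from a possibly infinite product; once the finiteness argument above is in place, the remainder is a routine combinatorial translation together with the elementary geometric-series factorization.
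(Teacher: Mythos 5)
Your proposal is correct and follows essentially the same route as the paper's proof: expand each factor as a (full or truncated) geometric series, multiply out, and identify the coefficient of $q^N$ with the count of partitions of $N$ having parts in $\mathcal{H}$ (with the multiplicity cap $d$ in the second case), with the final form of $f_d(q)$ coming from the finite geometric series identity. Your added justification that each coefficient depends on only finitely many factors is a welcome bit of rigor the paper leaves implicit, but it does not change the argument.
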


Note that the two forms for $f_d(q)$ came from the sum of a finite geometric series:

\[1+x+x^2+\dots+x^r=\frac{1-x^{r+1}}{1-x}\ \  \cite{Book}.\]

\begin{proof}\cite{Book}
	We wish to prove (1) and (2). First, let us index all elements of $\mathcal{H}$ such that $\mathcal{H}={h_1, h_2, h_3, h_4,\dots}$; then we have
	\[\prod_{n\in\mathcal{H}}(1-q^n)^{-1}=\prod_{n\in\mathcal{H}}(1+q^n+q^{2n}+q^{3n}+\cdots)\]
	\[\ \ \ \ \ \ \ \ \ \ \ \ \ \ \ \ \ =(1+q^{h_1}+q^{2h_1}+q^{3h_1}+\cdots)\]
	\[\ \ \ \ \ \ \ \ \ \ \ \ \ \ \ \times\ (1+q^{h_2}+q^{2h_2}+q^{3h_2})\]
	\[\ \ \ \ \ \ \ \ \ \ \ \ \ \ \ \ \ \ \ \ \ \ \ \times\ (1+q^{h_3}+q^{2h_3}+q^{3h_3}+\cdots)\]
	\[\times\cdots\ \ \ \ \ \ \ \ \ \ \]
	\[\ \ \ \ \ \ \ \ \ \ \ \ \ \ \ \ \ \ \ \ \ \ \ =\sum_{a_1\geq0}\sum_{a_2\geq0}\sum_{a_3\geq0}\cdots q^{a_1h_1+a_2h_2+a_3h_3+\cdots}\]
	and the exponent of $q$ is the size of the partition $[\dots, h_3^{a_3}, h_2^{a_2}, h_1^{a_1}]$. Therefore, the $q^n$ term appears $p(``\mathcal{H}",n)$ times for each $n$ \cite{Book}. Thus,
	\[\prod_{n\in\mathcal{H}}(1-q^n)^{-1}=\sum_{n\geq0}p(``\mathcal{H}",n)q^n.\]
	Furthermore, the proof of (2) is identical, except we have a finite geometric series instead of an infinite one:
	\[\prod_{n\in\mathcal{H}}(1+q^n+q^{2n}+\dots+q^{dn})\]
	\[\ \ \ \ \ \ \ \ \ \ \ \ \ \ \ \ \ \ \ \ \ \ \ \ \ \ \ \ = \sum_{d\geq a_1\geq0}\ \sum_{d\geq a_2\geq0}\ \sum_{d\geq a_3\geq0}\cdots q^{a_1h_1+a_2h_2+a_3h_3+\cdots}\]
	\[\ \ \ =\sum_{n\geq0}p(``\mathcal{H}"(\leq d),n)q^n.\]
\end{proof}

\begin{cor}
	\emph{(Euler).} $p(\mathcal{O},n)=p(\mathcal{D},n)$ for all $n$.
\end{cor}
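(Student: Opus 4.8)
The plan is to apply the preceding theorem to both families of partitions, obtain two infinite-product generating functions, and show that they coincide; comparing coefficients of $q^n$ then forces the equality for every $n$. First, since $\mathcal{O}=``\mathcal{H}_0"$ where $\mathcal{H}_0$ is the set of odd positive integers, equation (1) yields
\[\sum_{n\geq0}p(\mathcal{O},n)q^n=\prod_{k\text{ odd}}(1-q^k)^{-1}.\]
Second, because $\mathcal{D}=``\mathbb{N}"(\leq1)$, I would take $d=1$ and $\mathcal{H}=\mathbb{N}$ in equation (2), giving
\[\sum_{n\geq0}p(\mathcal{D},n)q^n=\prod_{k\geq1}(1-q^{2k})(1-q^k)^{-1}.\]

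The heart of the argument is to simplify this last product. I would split the denominator $\prod_{k\geq1}(1-q^k)$ into the product of its even-index factors and its odd-index factors, observing that the even-index factors are exactly $\prod_{k\geq1}(1-q^{2k})$. These cancel against the numerator, leaving
\[\prod_{k\geq1}(1-q^{2k})(1-q^k)^{-1}=\prod_{k\text{ odd}}(1-q^k)^{-1},\]
which is precisely the generating function for $p(\mathcal{O},n)$ obtained above. Since the two power series agree for $|q|<1$, their coefficients must match term by term, and therefore $p(\mathcal{O},n)=p(\mathcal{D},n)$ for all $n$.

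The main obstacle is making the cancellation of infinite products rigorous: the reindexing of the denominator into even- and odd-index factors and the telescoping against the numerator should be justified either by passing through finite partial products and taking a limit, or by treating all expressions as formal power series, so that no convergence subtleties intrude. Everything else reduces to a direct application of the theorem together with the identifications $\mathcal{O}=``\mathcal{H}_0"$ and $\mathcal{D}=``\mathbb{N}"(\leq1)$, both of which are recorded in the definitions preceding the theorem.
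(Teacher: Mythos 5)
Your proposal is correct and is precisely the standard derivation the paper intends: the corollary is stated immediately after the generating-function theorem, and the cancellation $\prod_{k\geq1}(1-q^{2k})(1-q^k)^{-1}=\prod_{k\ \mathrm{odd}}(1-q^k)^{-1}$ followed by comparison of coefficients is the argument being invoked (the paper simply omits writing it out). Your closing remark about justifying the infinite-product manipulation formally is a reasonable point of care, but it does not indicate any gap in the approach.
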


It is evident that the study of partitions is quite diverse, but the focus of this paper is the conjugate of a partition. If the reader is familiar with matrix methods or linear algebra, the concept will be familiar since it is similar to the transpose of a matrix. Taking the \textit{conjugate} of a partition is when every part of the partition is switched
with its index. This is similar to finding a transpose of a matrix, since we make every row of the Young diagram (or Ferrers diagram) a column and every column a row. This is equivalent to reflecting a partition diagram about its main diagonal through the first part's first box or dot. We see this visually in the next example.

\begin{ex}
	Suppose we are given the same partition $P=[4,3,2,2,1]$ as in Example \ref{Ex2}. We want to find its conjugate. We label each column below by color. The first column of $P$ is purple, and this will become the first part of the conjugate  $P'$ of some partition $P$ of $n$. The second column of $P$ is green, and this will become the second part of $P'$. We continue this process with every column, flipping the Young diagram of $P$ about its diagonal through the first box of the first part of $P$. Thus, we are left with $P'=[5,4,2,1]$.
	\begin{figure}[h]
		\centering
		\includegraphics[width=30mm]{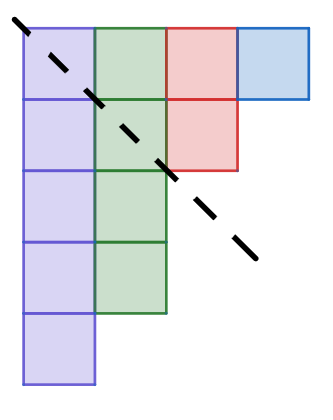}
		\includegraphics[width=20mm]{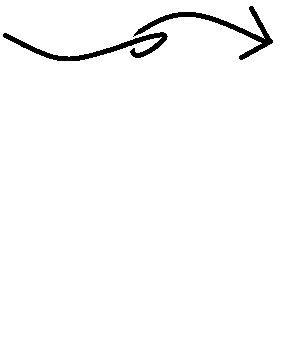}
		\includegraphics[width=38mm]{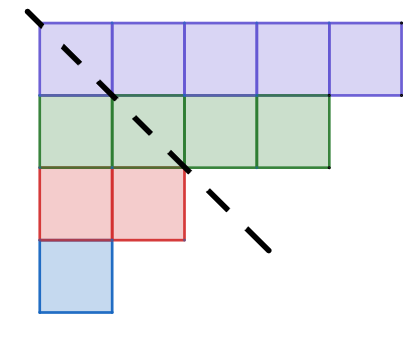}
		\caption{$P = [4,3,2,2,1]$ to the left; $P'$ to the right.}
		\label{figure1}
	\end{figure}
\end{ex}

Suppose we were to find the conjugate $P'$ of some partition $P$ of $n$. If $P=P'$, then we say $P$ is \textit{self-conjugate}. Let us see an example below:

\begin{ex}
	Suppose we have the partition $P=[4,3,2,1]$ of $n=10$. We find the conjugate of $P$ below, using the same method as before, and see that $P=P'$.
	\begin{figure}[h]
		\centering
		\includegraphics[width=30mm]{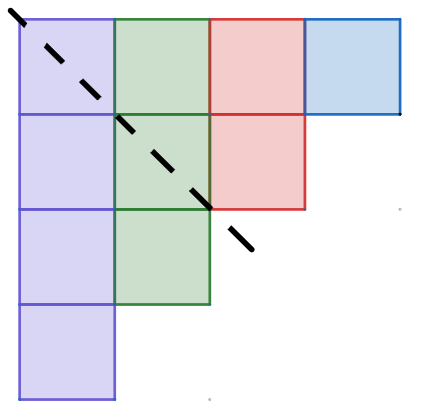}
		\includegraphics[width=20mm]{Redo5.PNG}
		\includegraphics[width=38mm]{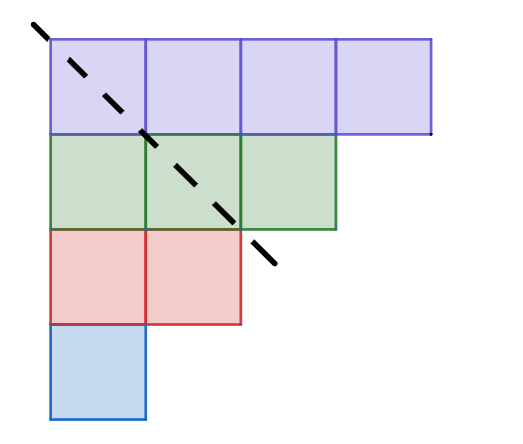}
		\caption{$P = [4,3,2,1]$ to the left; $P'$ to the right.}
		\label{figure2}
	\end{figure}
\end{ex}

This poses the question that this paper will answer: is it possible to determine if a partition $P$ is self-conjugate without the use of a Young or Ferrers diagram as a visual determinant? In particular, is there something about the parts themselves, a pattern to be found, that would let us know if $P$ is self-conjugate?

In Section \ref{Sec2}, we prove some preliminary
lemmas to begin our investigation of this question. In Section \ref{Sec3}, we prove an intermediate
theorem regarding symmetry of Young diagrams. In Section \ref{Sec4}, we introduce the idea
of partition addition and sub-partitions which will be essential in the proof of our main
theorem. In Section \ref{Sec5}, we discuss the idea of nesting and prove a few more lemmas about
symmetry of Young diagrams. Finally, in Section \ref{Sec6}, we state and prove our main theorem,
which gives a necessary and sufficient condition for a partition to be self-conjugate based
only on the parts and not on the Young diagram.

\section*{Acknowledgements}

Let it be known that without Dr. Madeline Dawsey from the University of Texas at Tyler, this paper and research would not have been possible. Dr. Dawsey is the one who started me on this journey through partitions and she has been my rock and foundation throughout all of this; I am forever grateful and indebted to her and the time she has given me. Furthermore, I would not be where I am today without several of the faculty of the mathematics department: Dr. Kassie Archer, Dr. Scott LaLonde, Dr. Katie Anders, Dr. Joseph Vandehey, Dr. Debbie Koslover, Dr. Sheldon Davis, Dr. Alex Bearden, Dr. Regan Beckham, Dr. David Milan, Dr. Christy Graves, and Dr. Steve Graves. Each of these professors are masters of their craft and have imbued me with the knowledge and skills to be successful both in mathematics and the world. I am far from the easiest student to instruct, and despite that, they all made my cup of success overflow. The mathematics department of the University of Texas at Tyler has my full gratitude and recommendation; there is no better place nor better group of fantastic people from which to learn the terrifying beauty that is the world of mathematics. Last, but not least, I would like to thank my grandparents and dedicate this paper to them: Benny and Linda Huff. In a full evaluation, it would be most obvious my life has been more full of turmoil than of joy, and yet, my grandparents have seen me through each dark moment; I owe every drop of success and goodness in me to the two of them.

\section{Simple Lemmas and Counting}\label{Sec2}

Before we introduce our lemmas and their proofs, let us see some examples and non-examples of self-conjugate partitions with their respective Young diagrams.

\begin{ex}\label{Ex3}
	$P=P'=[5,5,5,5,5]$ versus $Q=[6,5,5,5,5]\not=Q'=[5,5,5,5,5,1]$.
	\begin{figure}[h]
		\centering
		\includegraphics[width=30mm]{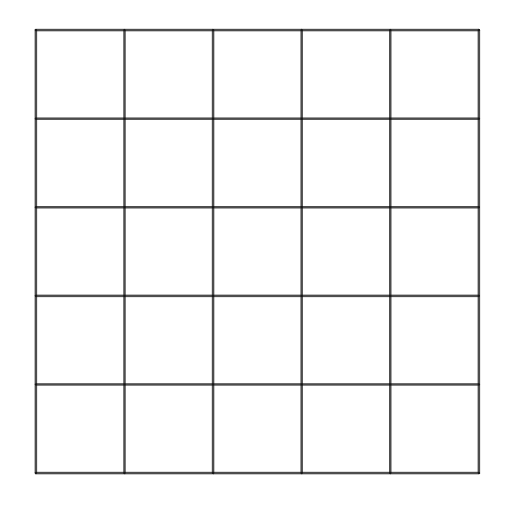}
		\includegraphics[width=31mm]{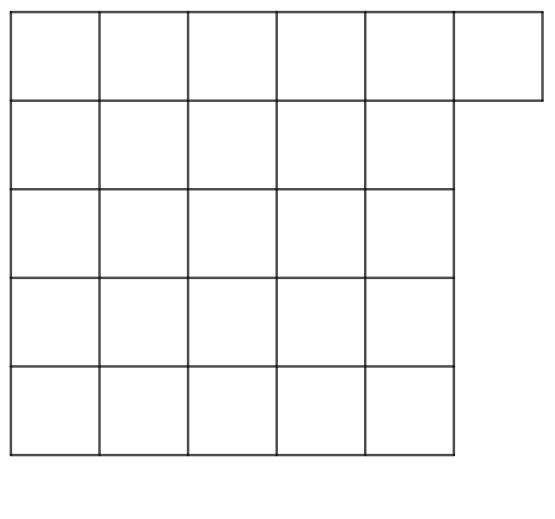}
		\includegraphics[width=25mm]{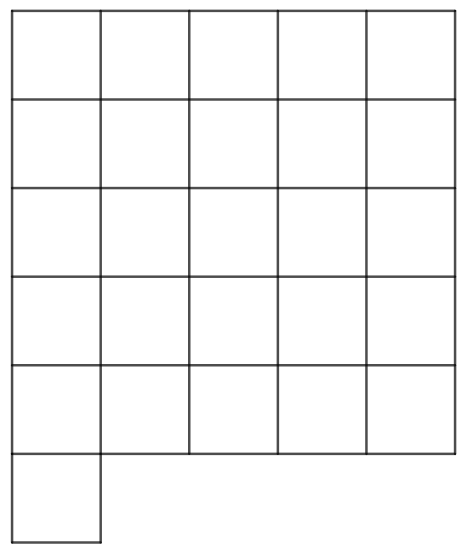}
		\caption{Left:$P = P'$, Center: $Q$, Right: $Q'$.}
		\label{figure3}
	\end{figure}
\end{ex}

\begin{ex}\label{Ex4}
	$P=P'=[5,4,3,2,1]$ versus $Q=[4,4,3,2,1]\not=Q'=[5,4,3,2]$.
	\begin{figure}[h]
		\centering
		\includegraphics[width=30mm]{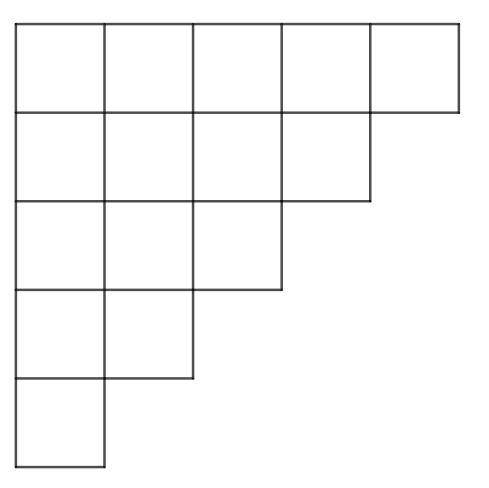}
		\includegraphics[width=30mm]{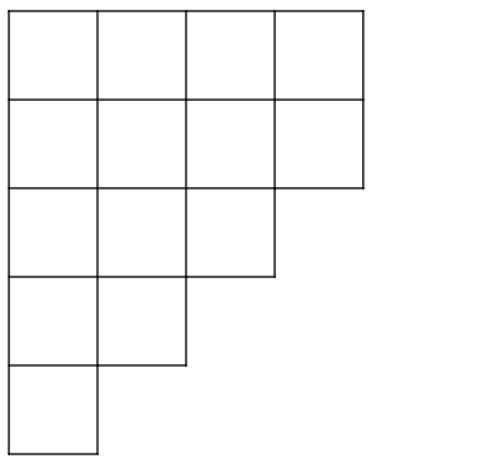}
		\includegraphics[width=30mm]{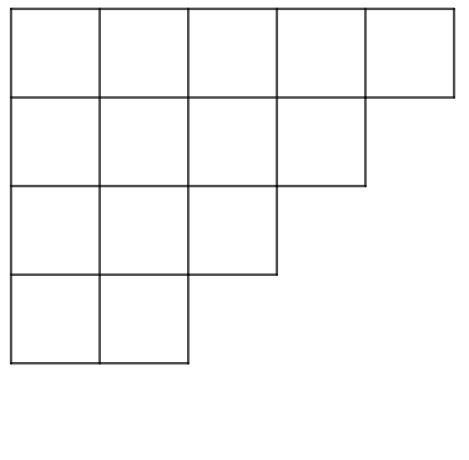}
		\caption{Left:$P = P'$, Center: $Q$, Right: $Q'$.}
		\label{figure4}
	\end{figure}
\end{ex}

\begin{ex}\label{Ex5}
	$P=P'=[5,1,1,1,1]$ versus $Q=[5,1,1,1]\not=Q'=[4,1,1,1,1]$.
	\begin{figure}[h]
		\centering
		\includegraphics[width=30mm]{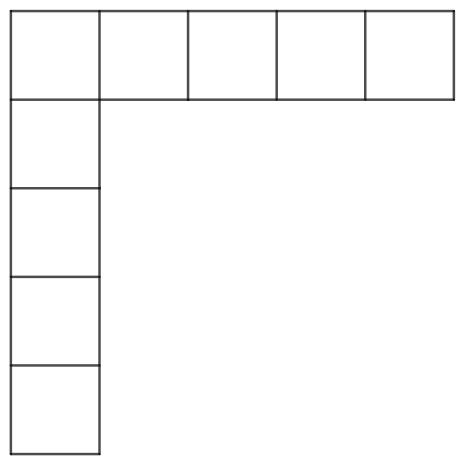}
		\includegraphics[width=30.5mm]{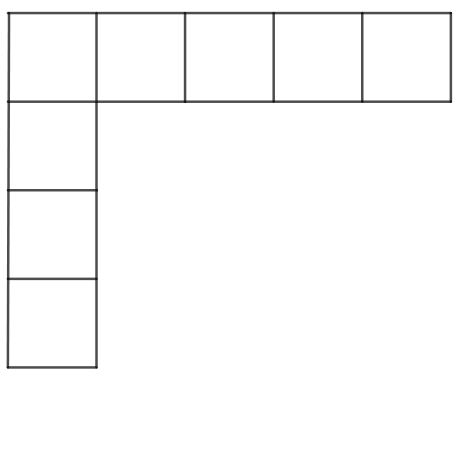}
		\includegraphics[width=31mm]{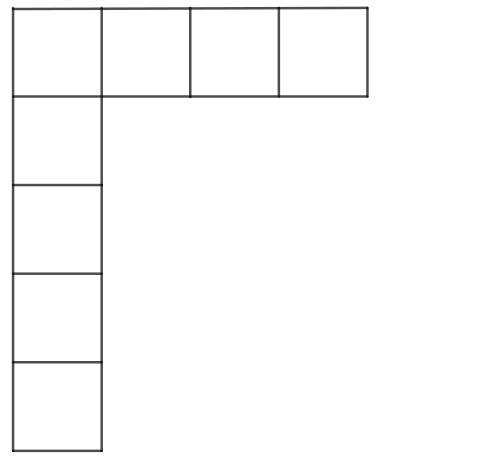}
		\caption{Left:$P = P'$, Center: $Q$, Right: $Q'$.}
		\label{figure5}
	\end{figure}
\end{ex}

In all of these examples, it is easy to see that the first part of $P$ must equal the number of parts of $P$ when $P=P'$; otherwise, the conjugate of our partition will not have the correct number of parts. This leads us to our first lemma.
\subsection{Size Equality}
\begin{lem} 
	\label{Equality}
	If a partition is self-conjugate, then
	the first part is equal to the number of parts.\\
\end{lem}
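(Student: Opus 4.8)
The plan is to exploit the single fact about conjugation that is immediate from the diagram picture: the first part of the conjugate $P'$ always records how many rows the Young diagram of $P$ has. Write $P = [y_0, y_1, \ldots, y_m]$, so that $P$ has $m+1$ parts and first part $y_0$. In the Young diagram of $P$, the leftmost column contains exactly one box from each of the $m+1$ rows, since every part $y_i$ is a positive integer and hence contributes at least one box to that column. Thus the first column has height $m+1$.

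Next I would translate that column count into a statement about $P'$. Conjugation reflects the diagram across its main diagonal, turning the first column of $P$ into the first row of $P'$; hence the first part of $P'$ equals the height of the first column of $P$, namely $m+1$, which is the number of parts of $P$. Finally I would invoke the hypothesis: if $P$ is self-conjugate then $P = P'$, so in particular $P$ and $P'$ share the same first part. The first part of $P$ is $y_0$, while the first part of $P'$ is the number of parts of $P$ as just established, and equating the two yields that $y_0$ equals the number of parts. This is exactly the claim.

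The only delicate point—hardly an obstacle—is pinning down the step ``the first part of the conjugate counts the rows'' rigorously from the purely visual definition of conjugation supplied so far. Once that identity is fixed, the lemma is immediate. If one wishes to bypass the diagram entirely, the same identity follows from the combinatorial description of conjugation, in which the first part of $P'$ equals the number of parts of $P$ that are at least $1$; since each $y_i$ is a positive integer, every part satisfies $y_i \geq 1$, so this count is precisely $m+1$. Either route reduces the lemma to the one-line comparison of first parts.
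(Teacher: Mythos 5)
Your proposal is correct and rests on the same key fact as the paper's proof, namely that the first part of the conjugate $P'$ equals the number of parts of $P$ (the height of the first column); the paper merely packages this as a two-case contradiction ($m>n$ and $m<n$) while you argue directly. No substantive difference.
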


\begin{proof}
	Suppose $m,\ n\in\N$. Let $P=[n,\ a,\ b,...,\ c]$ be a partition that is self-conjugate. Let $P$ have \textit{length}
	$\ell(P) = m$ (as in, $P$ has $m$ parts).
	\begin{enumerate}
		\item[1.] Suppose $m>n$. Then the first row of the Young diagram will be $n$-long for the original partition $P$ and $m$-long for the conjugate partition $P'$. But, since $m>n$, this contradicts $P$ being self-conjugate since the first parts of the original and conjugate partition are not equal.
		\item[2.] Suppose $m<n$. We get a similar contradiction as before since the first row of the Young diagram will be $n$-long for the original partition $P$ and $m$-long for the conjugate partition $P'$, and thus, the first parts are not equal.
	\end{enumerate}
	Thus, since $m\not>n$ and $m\not<n$, and $m,\ n\in\N$, then we have $m=n$ when $P$ is a self-conjugate.\\
\end{proof}

From now on, we only want to consider partitions of dimension $d\in\N$, where we say a partition has $dimension$ $d$ if the first part equals the number of parts equals $d$. Lemma \ref{Equality} is not an if and only if statement, but we do not want to consider partitions where the number of parts does not equal the first part, because those partitions clearly cannot be self-conjugate. All partitions of dimension $d$ that are self-conjugate fit into a square, $d\times d$ Young diagram. In fact, the following section provides a formula for the number of dimension $d$ partitions for any positive integer $d$.

\subsection{Counting}
In the world of mathematics, we often concern ourselves with how many mathematical objects we have of certain types. We now wish to see how many self-conjugate partitions of given a dimension $d$ there are. First, we count (where the main color represents the partition and the secondary color represents what we have taken away from the square $d\times d$ Young diagram to obtain the partition):\\
\begin{enumerate}
	\item[$d=1:$] 
	\begin{figure}[h]
		\centering
		\includegraphics[width=5mm]{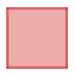}
		\caption{Self-conjugate partition of dimension 1.}
	\end{figure}
	There are $1=2^0=2^{1-1}$ self-conjugate partitions (red).\\
	
	\item[$d=2:$] 
	\begin{figure}[h]
		\centering
		\includegraphics[width=10mm]{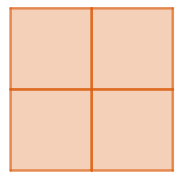}
		\includegraphics[width=10mm]{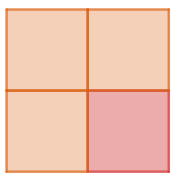}
		\caption{Self-conjugate partitions of dimension 2.}
	\end{figure}
	There are $2=2^1=2^{2-1}$ self-conjugate partitions (orange).\\
	
	\item[$d=3:$] 
	\begin{figure}[h]
		\centering
		\includegraphics[width=15mm]{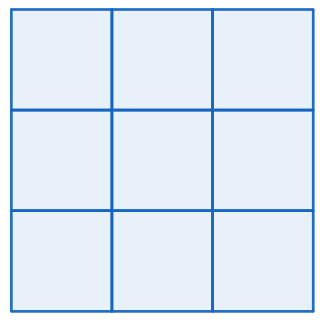}
		\includegraphics[width=15mm]{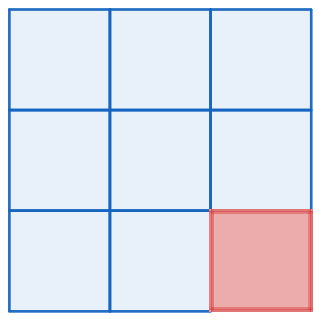}
		\includegraphics[width=15mm]{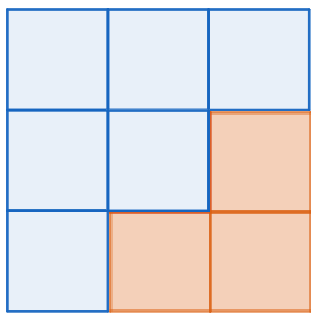}
		\includegraphics[width=15mm]{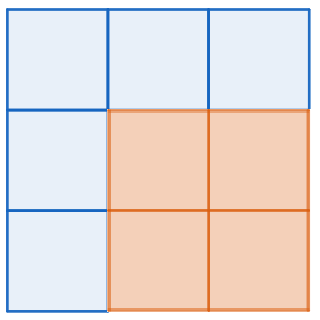}
		\caption{Self-conjugate partitions of dimension 3.}
	\end{figure}
	There are $4=2^2=2^{3-1}$ self-conjugate partitions (blue).\\
	
	\item[$d=4:$] 
	\begin{figure}[h]
		\centering
		\includegraphics[width=20mm]{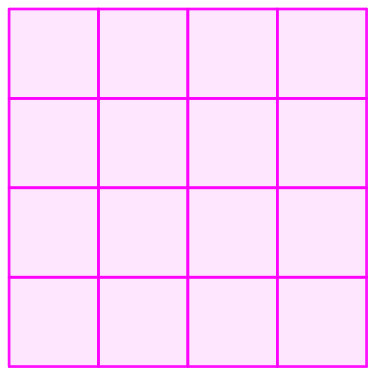}
		\includegraphics[width=20mm]{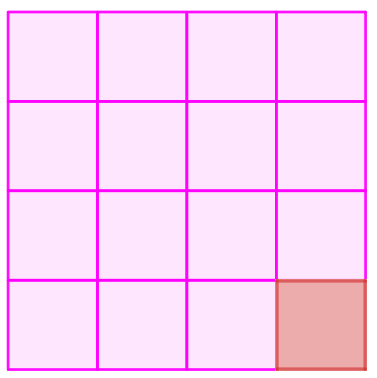}
		\includegraphics[width=20mm]{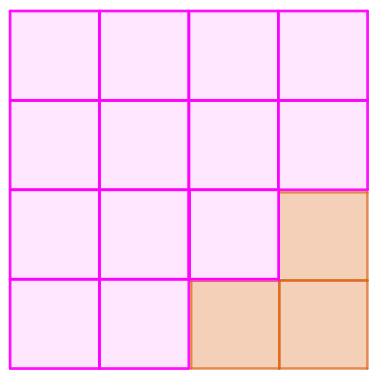}
		\includegraphics[width=20mm]{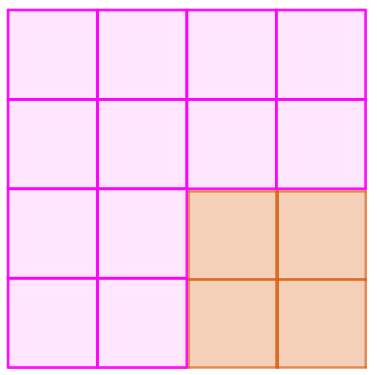}\\
		\includegraphics[width=20mm]{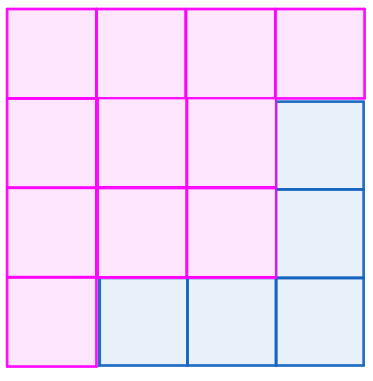}
		\includegraphics[width=20mm]{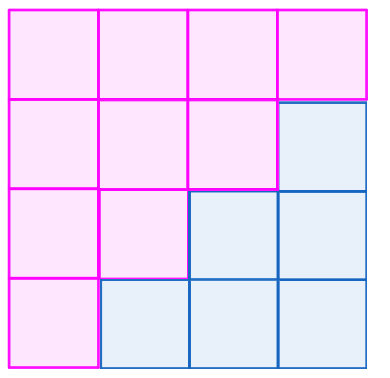}
		\includegraphics[width=20mm]{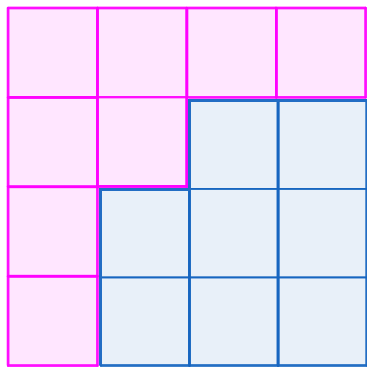}
		\includegraphics[width=20mm]{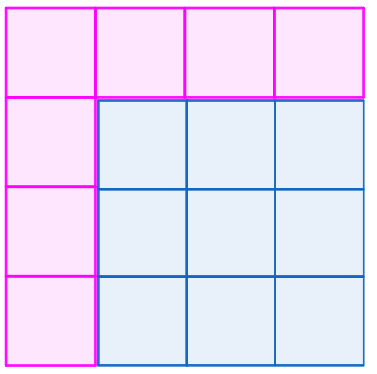}
		\caption{Self-conjugate partitions of dimension 4.}
	\end{figure}
	There are $8=2^3=2^{4-1}$ self-conjugate partitions (pink).\\
\end{enumerate}

The following theorem provides a formula for the number of sel-conjugate partitions of dimension $d\geq 3$.
\begin{thm}
	There are 
	\[2^{d-1}=2+\sum_{i=1}^{d-2}2^i\]
	self-conjugate partitions of dimension $d$, for $d\geq3$.
\end{thm}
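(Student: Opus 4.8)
The plan is to establish a bijection between self-conjugate partitions of dimension $d$ and certain symmetric configurations, then count those configurations directly.

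The plan is to set up a recurrence for the number of self-conjugate partitions of each dimension by peeling off a forced outer ``hook,'' and then to recognize the claimed formula as the unrolled recurrence. Let $T(d)$ denote the number of self-conjugate partitions of dimension $d$, with the convention $T(0)=1$ for the empty partition. By Lemma \ref{Equality} and the definition of dimension, any self-conjugate partition of dimension $d$ has first part $d$ and exactly $d$ parts; hence its Young diagram occupies the entire first row and, by symmetry about the main diagonal, the entire first column of the $d\times d$ square. This outer L-shaped hook of $2d-1$ boxes is therefore completely forced.

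Next I would establish a bijection by removing this forced hook. Deleting the first row and first column from a self-conjugate partition of dimension $d$ leaves a Young diagram sitting inside a $(d-1)\times(d-1)$ square; I would check that this residual diagram is again a genuine (weakly decreasing) partition, that it remains symmetric about the diagonal and so is self-conjugate, and that its dimension may be any value from $0$ to $d-1$. Conversely, any self-conjugate partition fitting in the $(d-1)\times(d-1)$ square can be surrounded by the forced hook to recover a self-conjugate partition of dimension $d$, and these two operations are mutually inverse. This yields the recurrence
\[
T(d)=\sum_{k=0}^{d-1}T(k).
\]

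Then I would solve the recurrence. The tabulated cases give $T(0)=T(1)=1$, $T(2)=2$, $T(3)=4$, and $T(4)=8$, suggesting $T(k)=2^{k-1}$ for $k\geq1$. An induction on $d$ using the recurrence confirms this: assuming $T(k)=2^{k-1}$ for all $1\le k\le d-1$, substitution gives
\[
T(d)=T(0)+\sum_{k=1}^{d-1}2^{k-1}=1+1+\sum_{i=1}^{d-2}2^{i}=2+\sum_{i=1}^{d-2}2^{i},
\]
which is exactly the right-hand side appearing in the statement; summing the finite geometric series then collapses this to $2^{d-1}$, establishing both displayed expressions simultaneously.

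The main obstacle I expect is the careful verification of the bijection in the second step: confirming that stripping the outer hook always produces a legitimate self-conjugate partition with parts still in non-increasing order, and that the correspondence is genuinely one-to-one across all admissible inner dimensions $0\le k\le d-1$. One must also treat the edge case where the inner partition is empty. Once the recurrence $T(d)=\sum_{k=0}^{d-1}T(k)$ is secured, everything that follows is the routine geometric-series computation already foreshadowed by the two equivalent forms given in the theorem.
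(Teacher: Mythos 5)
Your proof is correct, and it reaches the same recurrence and the same induction/geometric-series computation as the paper, but it justifies the recurrence by a genuinely different bijection. The paper obtains $x_d = 2+\sum_{j=2}^{d-1}x_j$ by viewing each self-conjugate partition of dimension $d$ as the $d\times d$ square with a region removed, observing that the removed region is itself (a rotated copy of) a self-conjugate partition of strictly smaller dimension, and treating the ``nothing removed'' and ``one box removed'' cases as the extra $+2$; this complementation argument is stated rather informally (``we remove squares in the pattern of the previous self-conjugate partitions''). You instead peel off the forced outer hook of $2d-1$ boxes (forced by Lemma \ref{Equality} and the symmetry of the diagram) and observe that what remains is a self-conjugate partition of any dimension $0\le k\le d-1$, giving $T(d)=\sum_{k=0}^{d-1}T(k)$, which is the same recurrence once one notes $T(0)=T(1)=1$. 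Your version has two advantages: the hook-stripping map and its inverse are completely explicit and easy to verify (whereas the paper's complement correspondence is left at the level of pictures), and it connects directly to the classical hook decomposition of self-conjugate partitions used elsewhere in the paper (the ``nest and egg'' picture of Section 5). The one point you flag yourself --- checking that the residual diagram is a legitimate self-conjugate partition of well-defined dimension $y_1-1$, and that surrounding any such partition by the hook inverts the map --- is exactly the verification that must be supplied, but it is routine. The remainder of your argument (strong induction plus summing the geometric series) matches the paper's computation essentially line for line.
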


\begin{proof}
	First, let us prove that $2^k=2+\sum_{i=1}^{k-1}2^i$. Notice:
	\begin{enumerate}
		\item[$\ $] $2+\sum_{i=1}^{k-1}2^i$
		\item[$=$] $2+2+\sum_{i=2}^{k-1}2^i$
		\item[$=$] $4+\sum_{i=2}^{k-1}2^i=2^2+4+\sum_{i=3}^{k-1}2^i$
		\item[$=$] $8+\sum_{i=3}^{k-1}2^i=2^3+\sum_{i=3}^{k-1}2^i$
		\item[$\ $] $\vdots$
		\item[$=$] $2^{k-2}+\sum_{i=k-2}^{k-1}2^i=2^{k-2}+2^{k-2}+\sum_{i=k-1}^{k-1}2^i$
		\item[$=$] $2(2^{k-2})+2^{k-1}=2^{k-1}+2^{k-1}=2(2^{k-1})$
		\item[$=$] $2^k$
	\end{enumerate}
	
	Thus, the claim that $2^k=2+\sum_{i=1}^{k-1}2^i$ holds true (for $k\geq2$). Now, we must show that for dimension $d$, there are $2^{d-1}$ self-conjugate partitions. From the pictures above, there are two things that are readily clear: for every dimension $d\geq2$, there will be a partition that has no squares removed and a partition that has only one square removed from its Young diagram. In other words, for dimension $d\geq2$,  we will have partitions of the integers $d^2$ and $d^2-1$, which each have unique representations in a given dimension $d$. The rest of the self-conjugate partitions relate to the rest of the previous dimensions, where we remove squares in the pattern of the previous self-conjugate partitions. Thus, let us say $x_d$ is the number of self-conjugate partitions of dimension $d$; we have $x_d=x_{d-1}+x_{d-2}+x_{d-3}+...+x_{d-(d-3)}+x_{d-(d-2)}+2$. So, our proof becomes a proof by induction.
	\begin{enumerate}
		\item[] \textit{Base case}: Suppose $d=3$. From the work before, we have that there are $2^2=2+\sum_{i=1}^{2-1}2^i=2+2=4$ self-conjugate partitions.
		\item[] \textit{Induction}: Suppose for all $d=j\leq k$, there are $2^{j-1}$ self-conjugate partitions of dimension $d=j$. We wish to find how many self-conjugate partitions there are for $d=k+1$. From our work before, we know $2^{k-1}=2+\sum_{i=1}^{k-2}2^i$. We also know that the number of self-conjugate partitions of dimension $d=k+1$ is $x_{k+1}=2+\sum_{j=2}^{k}x_j$. Thus, we write:
		\begin{enumerate}
			\item[$x_{k+1}$] $=2+\sum_{j=2}^{k}x_j$
			\item[$\ $] $=2+x_k+\sum_{j=2}^{k-1}x_j$
			\item[$\ $] $=2+2^{k-1}+\sum_{j=2}^{k-1}x_j$
			\item[$\ $] $=2+2^{k-1}+x_{k-1}+\sum_{j=2}^{k-2}x_j$
			\item[$\ $] $=2+2^{k-1}+2^{k-2}+\sum_{j=2}^{k-2}x_j$
			
			\item[$\ $] $\vdots$

			\item[$\ $] $=2+2^{k-1}+2^{k-2}+2^{k-3}+...+2^{k-(k-2)}+2^{k-(k-1)}$
			\item[$\ $] $=2+\sum_{i=1}^{k-1}2^i$
			\item[$\ $] $=2^k$
		\end{enumerate}
	\end{enumerate}
	This proves the theorem.
\end{proof}

One special type of partition which we noticed above and will discuss next is a partition whose Young diagram is a perfect square. This type of partition is related to the \textit{Durfee square} of a partition, the largest perfect square that can fit inside the Young diagram of the partition. In particular, we will focus next on partitions whose Young diagrams are equal to their own Durfee squares. We will refer to this type of partition as a ``pure Durfee square", and we will show that pure Durfee squares of any dimension are self-conjugate.

\subsection{Durfee Square}
\begin{lem} 
	\label{Durfee}
	Suppose the first part of a partition $P$ is equal to the number of parts of $P$, and all parts of $P$ are equal; namely, $P$ is a pure Durfee square. Then $P$ is self-conjugate.\\
\end{lem}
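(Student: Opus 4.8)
The plan is to show that the two hypotheses together force $P$ to be a full square array of boxes, and then to verify self-conjugacy by computing the conjugate directly. First I would introduce $d \in \N$ to denote the common value of the first part and the number of parts, so that $P$ has dimension $d$ in the sense just defined. Since every part of $P$ is equal and the first (hence largest) part is $d$, every part must equal $d$; therefore $P = [d, d, \dots, d]$ with exactly $d$ parts, and its Young diagram is a $d \times d$ array of boxes. This step is purely a matter of unwinding the definition of a pure Durfee square, so it requires no real work beyond stating it carefully.

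The heart of the argument is then to compute $P'$ from the meaning of conjugation. Recall that conjugating a partition interchanges rows and columns of the Young diagram, so the $j$-th part of $P'$ equals the length of the $j$-th column of the diagram of $P$, i.e.\ the number of parts of $P$ that are at least $j$. I would fix $j$ with $1 \le j \le d$ and observe that each of the $d$ parts of $P$ equals $d \ge j$, so all $d$ of them are counted; hence the $j$-th part of $P'$ equals $d$. Because the first part of $P$ is $d$, the diagram has exactly $d$ columns, so $P'$ has exactly $d$ parts. Combining these two observations gives $P' = [d, d, \dots, d] = P$, which is precisely the statement that $P$ is self-conjugate. Equivalently, one may phrase the same computation geometrically: a $d \times d$ square array is invariant under reflection about the main diagonal through the first box, and such a reflection is exactly the conjugation operation, so $P = P'$.

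I do not expect a genuine obstacle in this lemma, since it is the simplest symmetric shape; the only points that deserve care are the two bookkeeping checks, namely that $P'$ has the correct \emph{number} of parts (equal to the first part $d$ of $P$) and that each of those parts has the correct \emph{value} $d$. Both follow from the column-counting description of the conjugate applied to a diagram in which every row has full length $d$. I would note that Lemma \ref{Equality} cannot be invoked here, since it establishes the reverse implication (self-conjugacy forces the first part to equal the number of parts); instead self-conjugacy must be confirmed from scratch, which the direct computation above accomplishes.
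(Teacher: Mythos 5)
Your proposal is correct and follows essentially the same route as the paper's proof: both identify $P$ as a $d\times d$ square array and observe that every column, like every row, has length $d$, so the conjugate is the same list of $d$ parts each equal to $d$. Your version simply makes the column-counting bookkeeping more explicit than the paper does.
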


\begin{proof}
	Suppose $P$ is a pure Durfee square. Then we have that $P=[n,\ n,...,\ n]$ with $n$ parts. This means all parts of the conjugate partition $P'$ will be of size $n$, since every column and every row equals $n$. Thus, $P=P'=[n,\ n,...,\ n]$, so $P$ is a self-conjugate partition.\\
\end{proof}

The self-conjugate partition $P$ from Example 2.2 is also a special type of self-conjugate partition: a ``fancy triangle". In the pure Durfee square case, $P=[n,\ n,...,\ n]$ with $n$ parts is self-conjugate, whereas in the fancy triangle case, the pattern that holds is a series of perfectly descending parts. Explicitly, a partition $P$ is a fancy triangle if $P = [n,n-1,n-2,...,2,1]$. We now show this type of partition is always self-conjugate as well. 

\subsection{Fancy Triangle}
\begin{lem} 
	\label{Triangle}
	 Suppose the first part of a partition $P$ is equal to the number of parts, and each subsequent part is exactly one less than the previous part; namely, $P$ is a fancy triangle. Then P is self-conjugate.
\\
\end{lem}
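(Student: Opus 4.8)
The plan is to compute the conjugate $P'$ directly from the definition of conjugation---by counting the heights of the columns of the Young diagram---and to check that the resulting sequence is identical to $P$, without ever drawing a picture. Write the fancy triangle as $P = [n, n-1, \ldots, 2, 1]$, so that $P$ has exactly $n$ parts and the part in position $i$ (indexing from $i = 0$ as in Definition \ref{Def}) is $y_i = n - i$ for $0 \le i \le n - 1$. Since the first part is $n$ and there are $n$ parts, $P$ has dimension $n$ in the sense introduced after Lemma \ref{Equality}, so its diagram already sits inside the $n \times n$ square and $P'$ will again have first part at most $n$.

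Next I would recall that the $c$-th part of the conjugate (counting columns from $c = 1$) is by definition the number of boxes in the $c$-th column of the Young diagram of $P$, which equals the number of parts of $P$ that are at least $c$. For the fancy triangle this count is immediate: the inequality $y_i \ge c$ is equivalent to $n - i \ge c$, i.e. $i \le n - c$, and the admissible indices are $i \in \{0, 1, \ldots, n - c\}$, of which there are exactly $n - c + 1$. Hence the $c$-th part of $P'$ equals $n - c + 1$ for each $1 \le c \le n$, and there are no further columns because the $n$-th column already has height $1$. Comparing with $P$, whose $c$-th part is $y_{c-1} = n - (c - 1) = n - c + 1$, I would conclude that $P$ and $P'$ have the same number of parts and agree in every position, so $P = P'$ and $P$ is self-conjugate.

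The main thing to be careful about is bookkeeping rather than any deep idea: I must keep the $0$-indexing of parts (used in Definition \ref{Def}) separate from the $1$-indexing of columns that is natural for the conjugate, and I must verify not only that the nonzero parts match but also that $P'$ has precisely $n$ parts, so that no spurious short part is introduced or dropped. Establishing that the column-height count is exactly $n - c + 1$, and in particular that it is $0$ for $c > n$, is the one place where a small argument is needed; everything else is substitution.

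As an alternative, one could argue by induction on $n$: removing the first row together with the first column from the diagram of $[n, n-1, \ldots, 1]$ leaves the diagram of $[n-2, n-3, \ldots, 1]$, again a fancy triangle, and this outer hook is itself symmetric under reflection across the main diagonal; with base cases $[1]$ and $[2,1]$ this also yields self-conjugacy. I expect the direct column count above to be shorter and more transparent, so I would present it as the main proof and mention the inductive version only as a remark.
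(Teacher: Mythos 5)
Your proof is correct, but it takes a genuinely different route from the paper. The paper argues by induction on the diagram: it observes that the outer hook (first row plus first column) of a fancy triangle is symmetric, removes it to obtain a smaller fancy triangle, and iterates down to the base cases $[1]$ or $[2,1]$ --- which is precisely the ``alternative'' you sketch in your final remark. Your main argument instead computes the conjugate directly from its definition: the $c$-th part of $P'$ is the number of parts of $P$ that are at least $c$, which for $y_i = n-i$ is exactly $n-c+1$, matching the $c$-th part of $P$. Your column count is airtight (including the check that there is no column beyond the $n$-th), and it avoids both the picture and the induction entirely; it is arguably the cleaner and more self-contained argument. What the paper's peeling approach buys is consistency with the rest of its machinery --- the same ``remove the outer fancy L'' step reappears in Lemmas \ref{NonTrivSquare} and \ref{NonTrivTriangle} and in the Nest-And-Egg Theorem --- so the inductive version serves as a warm-up for those later proofs. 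One small point in the paper's favor of your version: the paper's reduction step asserts that removing the first row and column of $[n,n-1,\ldots,1]$ leaves $[n-1,n-2,\ldots,1]$, whereas (as you correctly note) it leaves $[n-2,n-3,\ldots,1]$; your direct computation sidesteps that bookkeeping issue altogether.
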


\begin{proof}
	Suppose $P$ is a fancy triangle: 
	\[P=[n,\ n-1, \ n-2, ...,\ n-(n-1)].\] 
	The first row of the Young diagrams of both $P$ and its conjugate $P'$ are $n$-long by assumption. Thus, there is symmetry along the first (outer) row and column of the Young diagram.
	Now, remove the first row and column of $P$ to form the new partition
	\[Q=[(n-1),\ (n-1)-1,\ (n-1)-2,...,\ (n-1)-((n-1)-1)].\]
	Let $m=n-1$. Then 
	\[Q=[m,\ m-1,\ m-2,...,\ m-(m-1)].\]
	So, $Q$ has $m=n-1$ parts and each part is exactly one less than the previous. Thus, $Q$ is another fancy
	triangle, and the first row of
	the Young diagrams of both $Q$ and its conjugate $Q'$ are $m$-long, so $P$ has symmetry
	along the first (outer) two rows and columns.
	Continue inductively until we get to one of two cases depending on whether $n$ is even or odd: a partition $Z$ that either is $Z=[1]$ or $Z=[2,1]$. Both cases are trivially self-conjugate, since $Z=[1]=Z'$ is symmetric by Lemma \ref{Durfee} and $Z=[2,1]=Z'$ is the base case of a symmetric fancy triangle.
	Thus, the claim of Lemma \ref{Triangle} holds true.\\
\end{proof}

There is one last special self-conjugate partition we wish to discuss: the ``fancy L". This partition only has one row and one column in its Young diagram, and both have equal length. Thus, we will have a partition with the first part $n$ and all other parts 1. We saw an example of this in Example \ref{Ex5}.

\subsection{Fancy L}
\begin{lem} 
	\label{L}
	Suppose $P = [n, 1, 1, . . . , 1]$ with $n$ parts; namely, $P$ is a fancy
	L. Then $P$ is self-conjugate.\\
\end{lem}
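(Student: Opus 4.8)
The plan is to show directly that the conjugate $P'$ of a fancy L is again a fancy L with the same parameters, so that $P = P'$. Recall that a fancy L of dimension $n$ is $P = [n, 1, 1, \ldots, 1]$ with $n$ parts, so its Young diagram consists of a single full top row of $n$ boxes together with a single full leftmost column of $n$ boxes, meeting at the corner box. The entire diagram is exactly this union of one row and one column; no other box is present.

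First I would compute the columns of the Young diagram of $P$ directly from the parts, since the conjugate partition is read off column-by-column. The first column has length $n$: the top row contributes its leftmost box, and each of the remaining $n-1$ parts equal to $1$ contributes a box in the first column, for a total of $n$ boxes. By contrast, for each index $j$ with $2 \le j \le n$, the $j$-th column receives a box only from rows whose part is at least $j$; the only such row is the first part (of size $n \ge j$), while every other part equals $1 < j$ and contributes nothing. Hence columns $2$ through $n$ each have length exactly $1$.

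Reading these column lengths off in order gives $P' = [n, 1, 1, \ldots, 1]$ with $n$ parts, which is identical to $P$. Therefore $P = P'$ and $P$ is self-conjugate. Alternatively, and perhaps more in the geometric spirit of Lemma \ref{Durfee} and Lemma \ref{Triangle}, I would observe that reflecting the diagram across its main diagonal swaps the single top row of length $n$ with the single leftmost column of length $n$; since these two arms have equal length and the shape is just their union, the reflection maps the diagram onto itself, which is precisely the statement that $P$ is self-conjugate.

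I do not expect any genuine obstacle here, as the argument is a one-step verification once the column lengths are computed; the fancy L is the simplest symmetric shape. The only point requiring a small amount of care is the degenerate boundary case $n = 1$, where $P = [1]$ has a single box and is trivially self-conjugate (and coincides with the pure Durfee square of dimension $1$ covered by Lemma \ref{Durfee}); I would note this explicitly so the statement holds for every $n \in \N$.
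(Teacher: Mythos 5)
Your proposal is correct and follows essentially the same route as the paper's own proof: both compute the conjugate directly by observing that the first column of $P$ has length $n$ (giving the first part of $P'$) and that every other column has length $1$, so $P' = [n,1,1,\ldots,1] = P$. Your write-up is simply a more explicit version of the same column-counting argument, with the $n=1$ boundary case noted.
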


\begin{proof}
	Suppose $P = [n, 1, 1, . . . , 1]$
	with $n$ parts. The first row of the Young diagram of the conjugate $P'$
	is $n$-long because
	$P$ has $n$ parts. Furthermore, $P'$ has $n$ parts, with one part of size $n$ followed by $n-1$ parts of size 1, because only the first row of the Young diagram of $P$ is $n$-long and the rest of the parts are of size 1. Thus, the first (outer) row and column of $P$ is symmetric. Since there are no other boxes to consider in the Young diagrams of either $P$ or $P'$, our claim for Lemma \ref{L} holds true.\\
\end{proof}

\section{Piecewise Theorem}\label{Sec3}

We now change direction slightly and prove a seemingly unrelated theorem. The idea of this section is that if a partition is self-conjugate, then the area on either side of the diagonal in a Young diagram is equal; this theorem and its proof will help us in the proof of the main theorem later.

\begin{thm} 
	\label{Piecewise}
	Let $\phi$ be a map on the set of all partitions such that if\\ $P=[z_0,z_1,...,z_m]$, then $\phi(P)$ is given by the piecewise function
	\[\phi:\ P\to f(x)=
	\begin{cases} 
		z_0 & \mathrm{if\ }0\leq x <1 \\
		z_1 & \mathrm{if\ }1\leq x <2 \\
		\vdots & \ \ \\
		z_m & \mathrm{if\ }m-1\leq x <m. 
	\end{cases}\]
	
	Let $g(x) = x$. If $P$ is self-conjugate, then the area of the region bounded by $f$, $g$, and the $x$-axis is equal to the area of the region bounded by $f$, $g$, and the $y$-axis.\\
\end{thm}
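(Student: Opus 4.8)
The plan is to reinterpret the region under the step function $f$ as the Young diagram of $P$ drawn with unit boxes, and then to exploit the fact that self-conjugacy is exactly invariance under reflection across the diagonal $g$. Since $f$ takes the constant value $z_i$ on each interval $[i,i+1)$ and the parts are non-increasing, the region $R$ trapped between the graph of $f$ and the $x$-axis is the disjoint union of the unit boxes $[i,i+1)\times[k,k+1)$ with $0\le k<z_i$, and its total area is $|P|$. Because $f$ is non-increasing while $g(x)=x$ is strictly increasing, the two graphs cross exactly once, so the line $g$ cuts $R$ cleanly into a lower piece $A=R\cap\{(x,y):y\le x\}$ and an upper piece $B=R\cap\{(x,y):y\ge x\}$, overlapping only along the diagonal (a set of measure zero). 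I would first check that $A$ is precisely the region bounded by $f$, $g$, and the $x$-axis and that $B$ is precisely the region bounded by $f$, $g$, and the $y$-axis, so that the theorem is equivalent to the claim $\mathrm{area}(A)=\mathrm{area}(B)$. Since $\mathrm{area}(A)+\mathrm{area}(B)=\mathrm{area}(R)=|P|$, it then suffices to show the two pieces have equal area.

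The key tool is the reflection $\sigma(x,y)=(y,x)$ across the diagonal. This is an isometry, hence area-preserving; it is an involution; and it fixes the line $g$ pointwise while interchanging the half-planes $\{y\le x\}$ and $\{y\ge x\}$. The heart of the argument is to verify that $\sigma$ carries $R$ onto the analogous region $R'$ built from the conjugate partition $P'$. Indeed, the box $[i,i+1)\times[k,k+1)$ lies in $R$ iff $k<z_i$, and its reflection $[k,k+1)\times[i,i+1)$ lies in $R'$ iff $i<z'_k$, where $z'_k=\#\{j:z_j>k\}$ is the $k$-th part of the conjugate; because the parts are non-increasing these two conditions coincide, so $\sigma(R)=R'$. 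This is just the box-level restatement of the fact, described in Section 1, that reflecting a Young diagram across its main diagonal produces the conjugate diagram.

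To finish, I would invoke the hypothesis that $P$ is self-conjugate: then $P=P'$, so $R'=R$ and hence $\sigma(R)=R$. Applying $\sigma$ to $A$ gives
\[\sigma(A)=\sigma\big(R\cap\{y\le x\}\big)=\sigma(R)\cap\{x\le y\}=R\cap\{y\ge x\}=B,\]
so $\sigma$ restricts to an area-preserving bijection from $A$ onto $B$. Therefore $\mathrm{area}(A)=\mathrm{area}(B)$, which is exactly the asserted equality.

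I expect the main obstacle to be bookkeeping rather than conceptual. One must carefully justify that the two ``three-curve'' regions named in the statement really are the below-diagonal and above-diagonal portions of $R$ (rather than unbounded or differently shaped regions); the single-crossing of $f$ and $g$ and the monotonicity of the parts are what make each boundary arc connected and keep the regions bounded. One must also confirm that the reflection genuinely sends the box decomposition of $P$ to that of $P'$, and note that the overlap of $A$ and $B$ along the diagonal has measure zero so that the areas add without double counting. Lemma~\ref{Equality} provides useful context, since for a self-conjugate $P$ the first part equals the number of parts and the whole diagram sits inside a $d\times d$ square, which keeps the diagonal from leaving the diagram's bounding box; but the symmetry argument itself does not actually require it.
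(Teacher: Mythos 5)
Your proof is correct, and it takes a genuinely different route from the paper's. The paper argues by contradiction: it assumes the two areas are unequal, and then locates a specific index where the value of $f$ for $P$ disagrees with the value that $f$ would take for the conjugate $P'$, contradicting $P=P'$; the argument leans on the figures and is somewhat informal about why unequal areas force such a mismatch. You instead give a direct symmetry argument: you decompose the region $R$ under the step function into unit boxes, verify at the box level that the reflection $\sigma(x,y)=(y,x)$ carries $R$ onto the region $R'$ of the conjugate (via the equivalence $k<z_i \iff i<z'_k$ for non-increasing parts), and then use $P=P'$ to conclude $\sigma(R)=R$, so that $\sigma$ is an area-preserving bijection between the below-diagonal and above-diagonal pieces. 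Both proofs ultimately rest on the same fact --- conjugation is reflection across the diagonal --- but yours makes that correspondence precise and gets the conclusion directly, whereas the paper's contradiction argument leaves the ``where exactly does the extra area come from'' step at the level of a picture. Your version also cleanly isolates the one genuinely checkable claim (the box-level equivalence), and your closing remarks about the single crossing of $f$ and $g$ and the measure-zero overlap address exactly the bookkeeping the paper glosses over; the only caveat worth noting is the paper's off-by-one in the displayed definition of $f$ (it puts $z_m$ on $[m-1,m)$ rather than $[m,m+1)$), which your reading on $[i,i+1)$ silently and correctly repairs.
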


\begin{proof}
	Suppose $P$ is a self-conjugate partition, and suppose you calculated the areas of the regions defined above by integration.\\
	\begin{figure}[h]
		\centering
		\includegraphics[width=120mm]{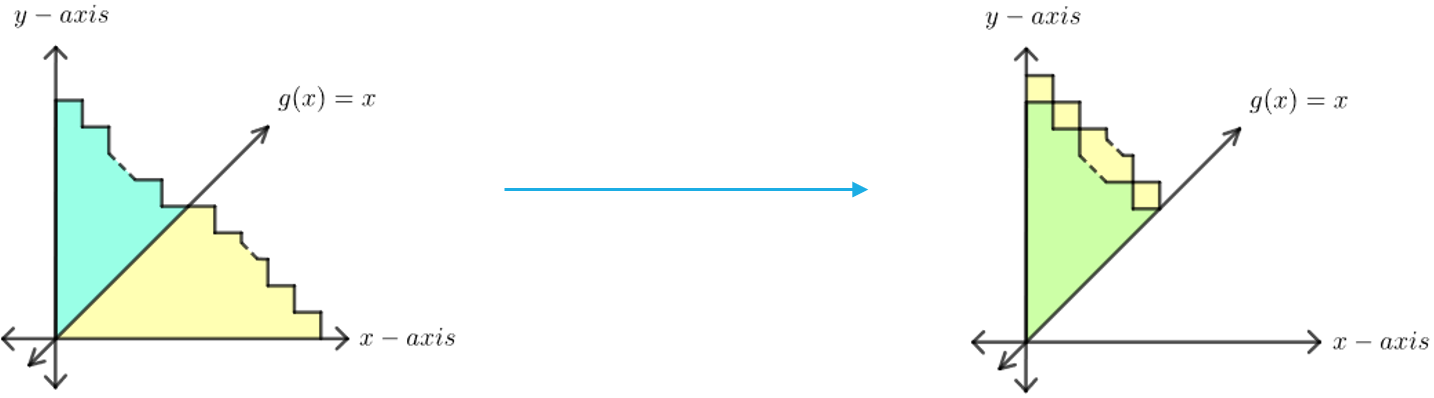}
		\caption{Superimposing areas.}
		\label{figure6}
	\end{figure}

	Assume by way of contradiction that
	when you superimpose one region on the other, by reflecting one across $g$, one has
	more area. Without loss of generality, assume the region bounded by the x-axis has
	more area.
	
	\begin{figure}[h]
		\centering
		\includegraphics[width=90mm]{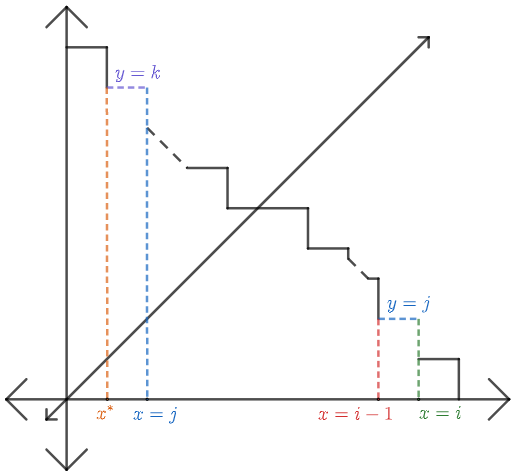}
		\caption{Comparing bounds.}
		\label{figure7}
	\end{figure}
	
	Either the first part does not equal the number of parts in $P$, failing Lemma \ref{Equality}; or, for some $i\in\{1,\ 2,...,\ m\}$, and for  $(i-1)\leq x <i$, we have $f(x)=j$, and  for some $(j-1)\leq x\textrm{*}<j$ and $j\leq(i-1)$, we have $f(x\textrm{*})=k$, where $k<i$ (see Figure \ref{figure7} for an illustration).
	
	This implies the conjugate of $P$, $P'$, will not equal $P$, since $f(x\textrm{*})=k$ for $P$, but $f(x\textrm{*})=i$ for $P'$ and $k<i$. Thus, we have have a contradiction of our assumption that $P=P'$.
	
	Similarly, if the region bounded by the $y$-axis had more area, we would draw a similar conclusion. Thus, if the areas are unequal, $P$ cannot be a self-conjugate.
\end{proof}

The following is another symmetry
property shared by all self-conjugate partitions.

\begin{prop}
	Let $\phi$ be the map defined in Theorem \ref{Piecewise}. If $P$ is self-conjugate, then $f(x)=f^{-1}(x)$.\\
\end{prop}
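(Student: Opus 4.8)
The plan is to recast the algebraic identity $f = f^{-1}$ as a geometric symmetry of the graph of $f$, and then to derive that symmetry directly from self-conjugacy. The standard fact I would lean on is that the graph of $f^{-1}$ is exactly the reflection of the graph of $f$ across the line $g(x) = x$: if $f(a) = b$ then $f^{-1}(b) = a$, and reflection across $g$ sends $(a,b)$ to $(b,a)$. Consequently $f = f^{-1}$ holds precisely when the graph of $f$ is carried to itself by reflection across $g$. So the proposition reduces to showing that, for a self-conjugate $P$, the staircase graph of $f = \phi(P)$ is symmetric about the diagonal $g$.

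To establish that symmetry I would make the link between $\phi$ and conjugation explicit. Writing $f_P = \phi(P)$ and $f_{P'} = \phi(P')$, I would argue that the step function $f_{P'}$ attached to the conjugate partition is precisely the reflection of $f_P$ across $g$. This is the analytic restatement of the fact, recalled in the Introduction, that conjugation reflects the Young diagram about its main diagonal: the value $f_P(x)$ records the length of the row indexed by $x$, and under reflection a row of $P$ becomes a column of $P'$, so the reflected graph records column lengths, which are exactly the parts of $P'$. Equivalently, I would verify the classical description $z'_j = \#\{i : z_i \ge j\}$ of the conjugate parts and check that this is what the reflected staircase produces. Combining this with the first paragraph, the reflection of $f_P$ across $g$ equals $f_{P'}$, while the same reflection equals $f_P^{-1}$, so $f_{P'} = f_P^{-1}$.

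The argument then closes in one line: since $P$ is self-conjugate we have $P = P'$, hence $f_P = f_{P'} = f_P^{-1}$, which is the desired equality $f = f^{-1}$. It is worth noting that this is strictly stronger than Theorem \ref{Piecewise}, since equal areas on either side of the diagonal would not by themselves force the graph to be symmetric; the extra input is the pointwise reflection statement relating $f_{P'}$ to $f_P$.

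I expect the main obstacle to be that $f$ is a step function and hence not injective, so $f^{-1}$ does not exist as a function in the naive sense. To handle this I would fix a convention for the inverse, most cleanly by regarding the staircase as the full curve that includes the vertical risers between consecutive steps; the union of horizontal and vertical segments is then a genuine curve whose reflection across $g$ is again a staircase of the same type, and on this curve both the reflection statement and the identification of reflection with inversion become clean. The degenerate points, namely the corners of the staircase and the points lying on $g$ itself, can then be checked separately, and care is needed at the endpoints of the domain, where the indexing offset in the definition of $\phi$ must be matched against the number of parts of $P'$.
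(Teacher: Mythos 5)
The paper states this proposition as a remark after Theorem \ref{Piecewise} and supplies no proof at all, so there is nothing of record to compare your argument against; your proposal is, in effect, the missing proof. Its core is correct and is the standard argument: the reflection of the staircase graph of $f_P$ across $g(x)=x$ is the staircase graph of $f_{P'}$ (this is the analytic restatement of the fact that conjugation reflects the Young diagram about its main diagonal, equivalently that the conjugate parts are $z'_j=\#\{i: z_i\geq j\}$), the same reflection is what produces the graph of $f_P^{-1}$, and $P=P'$ then yields $f_P=f_{P'}=f_P^{-1}$. You are also right that this is genuinely stronger than the equal-areas conclusion of Theorem \ref{Piecewise}; the paper's own counterexample $[3,3,1,1]$ shows areas alone do not suffice. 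The one point that must be settled before the statement even parses is exactly the one you flag: $f$ is a non-increasing step function that is constant on every block of repeated parts (for a pure Durfee square it is constant on its entire domain), so a literal inverse function does not exist, and neither the proposition nor the paper says what $f^{-1}$ is to mean. Your riser-curve convention repairs this; an equivalent and slightly cleaner formalization is the layer-cake generalized inverse $f^{-1}(y):=\big|\{x\geq 0: f(x)>y\}\big|$, which for the step function of $[z_0,\dots,z_m]$ is precisely the step function of the conjugate partition, after which the proposition is immediate. Finally, do attend to the bookkeeping you mention at the end: the paper's definition of $\phi$ has an off-by-one in its last case ($z_m$ on $m-1\leq x<m$, giving a domain of length $m$ for a partition with $m+1$ parts), and it is Lemma \ref{Equality} (first part equals number of parts) that makes the domain of $f$ and the range of $f$ agree, which is needed for $f=f^{-1}$ to be an assertion about the same function on the same interval.
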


Note that the converse of Theorem \ref{Piecewise} is not true. Having equal area on either side of the diagonal does \textit{not} imply that the partition is self-conjugate. Consider the counterexample below.
\begin{nex}
	Suppose we have the partition $P=[3,3,1,1]$. We graph $\phi(P)$ below on the Cartesian plane.
	\begin{figure}[h]
		\centering
		\includegraphics[width=60mm]{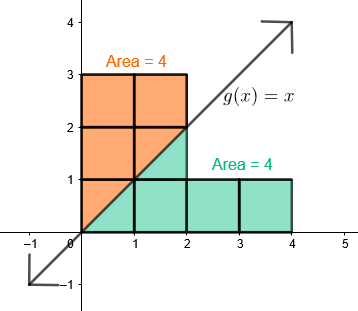}
	\end{figure}
	Notice that the areas on either side of the diagonal $g(x)=x$ (orange and teal) are equal, but the partition is not self-conjugate. Thus, equal areas on either side of the diagonal cannot imply a self-conjugate partition, so the relationship is not ``if and only if".
\end{nex}

\section{Addition and Sub-Partitions}\label{Sec4}
We now introduce two definitions with some examples. Being able to add partitions and to sub-partition as defined below will be necessary to later proofs. 
\begin{defn} 
	Suppose $P$ and $Q$ are partitions. To \textit{add} the partitions $P+Q=R$,  combine the parts
	of $P$ and $Q$ into one partition R, reordering the parts as necessary to satisfy Definition
	\ref{Def}.\\
\end{defn}

This operation on partitions has been previously defined by Schneider as the \textit{product} of two partitions \cite{RS}.
Note that addition of partitions
is both associative and commutative. For example, given the partitions $A=[4,3,2,1]$ and $B=[3,2,2]$, we have $A+B=[4,3,3,2,2,2,1]=B+A$.\\

\begin{defn} 
	A \textit{sub-partition} $Q$ of a partition $P$ is a set of sequential parts of $P$ which is itself a partition and satisfies Definition \ref{Def}.
\end{defn}
This sub-partition $Q$, when correctly added to another sub-partition, or series of sub-partitions, will yield $P$ itself.
To sub-partition $P$ is to partition $P$ into sub-partitions. Thus, ``sub-partition" is both a noun and a verb. Sub-partition is the inverse operation of addition.

Furthermore, the empty partition $E=[\ ]$ will serve as an identity, so $E+P=P+E=P$ for any partition $P$.\\

For example, given $P=[a,b,c,d]$, we could sub-partition $P$ in some of the ways below (and yield the following valid sub-partitions):
\begin{enumerate}
	\item[1.] $P=[a,b,c,d]+[\ ]=[\ ]+[a,b,c,d]$
	\item[2.] $P=[a,b,c]+[d]$
	\item[3.] $P=[a,b]+[c,d]$
	\item[4.] $P=[a]+[b,c,d]$
	\item[5.] $P=[a,b]+[c]+[d]$
	\item[6.] $P=[a]+[b,c]+[d]$
	\item[7.] $P=[a]+[b]+[c,d]$
	\item[8.] $P=[a]+[b]+[c]+[d]$
	\item[9.] $P=[a,d]+[b,c]$
	\item[10.] $P=[a,c]+[b,d]$\\
\end{enumerate}

\section{Nesting}\label{Sec5}
	\begin{figure}[h]
	\centering
	\includegraphics[width=50mm]{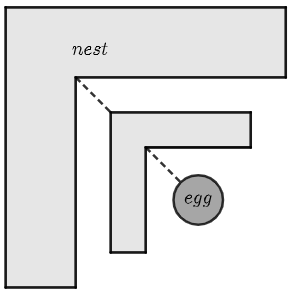}
	\caption{Nested Diagram of some partition $P$.}
	\label{figure8}
	\end{figure}

When looking at the Young diagram of some symmetric partition $P$, we can see a ``nest" of fancy L(s) and an ``egg" that is either a pure Durfee square or a fancy triangle (see Figure \ref{figure8} above for an illustration).
\begin{ex}
	Consider the self-conjugate partition $P=[7,7,6,4,3,3,1]$. Its Young diagram is given below.\\
	\begin{figure}[h]
		\centering
		\includegraphics[width=50mm]{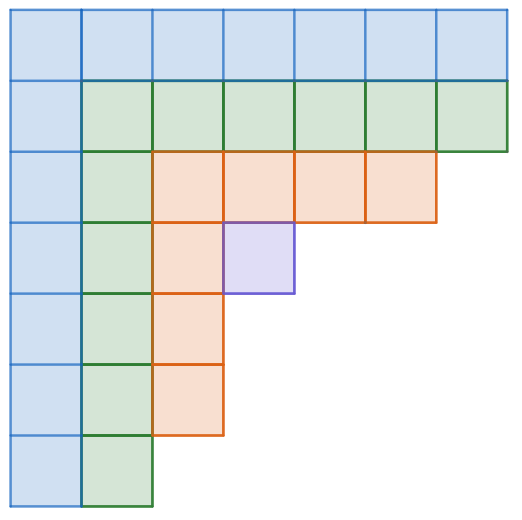}
	\end{figure}

	The blue, green, and orange areas are fancy Ls, and the purple area is a (trivial) Durfee square. The fancy Ls combined are the nest and the Durfee square is the egg. Now, we can combine the blue and green area to be a fancy L of width 2,
	as opposed to the fancy Ls of width 1 in Lemma \ref{Triangle}. These higher width fancy Ls are still
	symmetric.
\end{ex}

We want to show that if both the nest and the egg are truly symmetric, then we will have a self-conjugate partition.  We first prove the trivial cases, where we only have a single fancy L of width 1 nesting a symmetric egg (first, a pure Durfee square, and second, a fancy triangle).
\subsection{The Trivial Cases}

\begin{lem} 
	Suppose the first part of a partition $P$ is equal to the number of parts of $P$, and 
	\[P=[n,\ m,\ m,...,\ m,\ 1,\ 1, ..., 1].\]
	\begin{figure}[h]
		\centering
		\includegraphics[width=35mm]{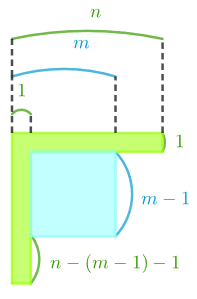}
	\end{figure}
	
	Sub-partition $P$ as such:
	\[P=:[n]+[m,\ m,...,\ m]+[1,\ 1,..., 1]=X+Y+Z,\]
	respectively. Suppose $Y$ has $m-1$ parts and $Z$ has $n-(m-1)-1$ parts. Then, $P$ is self-conjugate.\\
\end{lem}

\begin{proof}
	Let $P$ be as above. Remove the first (outer) row and column. Then we are left with the partition
	\[Q=:[(n-n)]+[(m)-1,\ (m)-1,...,\ (m)-1]+[(1-1),\ (1-1),...,\ (1-1)]\]
	\[=[\ ]+[(m-1),\ (m-1),...,\ (m-1)]+[\ ]\ \ \ \ \ \ \ \ \ \ \ \ \ \ \ \ \ \ \ \ \ \ \ \ \ \ \ \ \ \ \ \ \ \ \ \ \]
	\[=E+[(m-1),\ (m-1),...,\ (m-1)]+E\ \ \ \ \ \ \ \ \ \ \ \ \ \ \ \ \ \ \ \ \ \ \ \ \ \ \ \ \ \ \ \ \ \ \ \ \]
	\[=[m-1,\ m-1,...,\ m-1].\ \ \ \ \ \ \ \ \ \ \ \ \ \ \ \ \ \ \ \ \ \ \ \ \ \ \ \ \ \ \ \ \ \ \ \ \ \ \ \ \ \ \ \ \ \ \ \ \ \ \ \ \ \ \]
	So, $Q$ is a pure Durfee square, of dimension $m-1$, and is self-conjugate by Lemma \ref{Durfee}. The first (outer) row and column was a fancy L, which is self-conjugate by Lemma \ref{L}. Thus, the Young diagram of $P$ is symmetric, so $P$ is self-conjugate.\\
\end{proof}

\begin{lem} 
	Suppose the first part of a partition $P$ is equal to the number of parts of $P$ and
	\[P=[n,\ m,\ (m-1),...,\ (m-(m-2)),\ \ 1,\ 1, ..., 1].\]
	\begin{figure}[h]
		\centering
		\includegraphics[width=60mm]{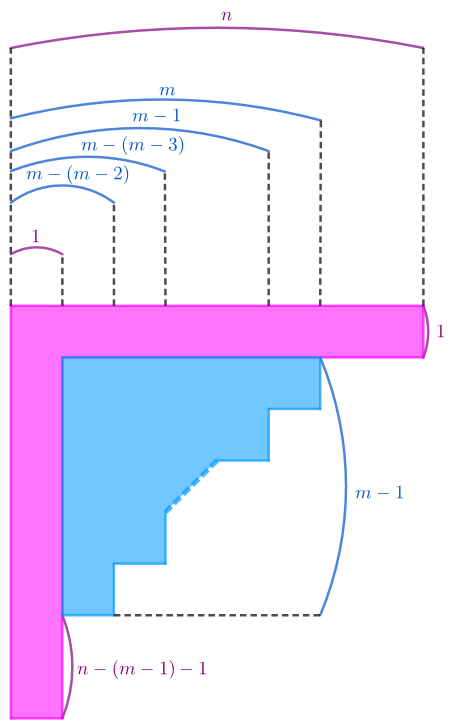}
	\end{figure}
	
	Sub-partition $P$ as such:
	\[P=:[n]+[m,\ (m-1),...,\ (m-(m-2))]+[1,\ 1, ..., 1]=X+Y+Z,\]
	respectively. Suppose $Y$ has $m-1$ parts and $Z$ has $n-(m-1)-1$ parts. Then $P$ is self-conjugate.\\
\end{lem}

\begin{proof}
	Let $P$ be as above. Remove the first (outer) row and column. Then we are left with the partition
	\[Q=:[(n-n)]+[m-1,\ (m-1)-1,...,\ (m-(m-2))-1]+[(1-1),...,\ (1-1)]\]
	\[=[\ ]+[m-1,\ (m-1)-1,...,\ (m-(m-2))-1]+[\ ]\ \ \ \ \ \ \ \ \ \ \ \ \ \ \ \ \ \]
	\[=E+[m-1,\ (m-1)-1,...,\ (m-(m-2))-1]+E\ \ \ \ \ \ \ \ \ \ \ \ \ \ \ \ \ \]
	\[=[m-1,\ (m-1)-1,...,\ (m-1)-((m-1)-1)].\ \ \ \ \ \ \ \ \ \ \ \ \ \ \ \ \ \ \ \ \ \ \ \]
	So, $Q$ is a fancy triangle and is self-conjugate by Lemma \ref{Triangle}. The first (outer) row and column was a fancy L, which is self-conjugate by Lemma \ref{L}. Thus, the Young diagram of $P$ is symmetric, so $P$ is self-conjugate.\\
\end{proof}

We now want to show that if both the nest and the egg are symmetric, then we will have a self-conjugate partition in general, where our nests are non-trivial multiple fancy Ls. We will show this is true for eggs that are pure Durfee squares and fancy triangles.
\subsection{The Non-Trivial Cases}
\begin{lem} 
	\label{NonTrivSquare}
	Suppose the first part of a partition $P$ is equal to the number of parts of $P$ and 
	\[P=[n,\ n,...,\ n,\ m,\ m,...,\ m,\ j,\ j,...,\ j].\]
	\begin{figure}[h]
		\centering
		\includegraphics[width=37mm]{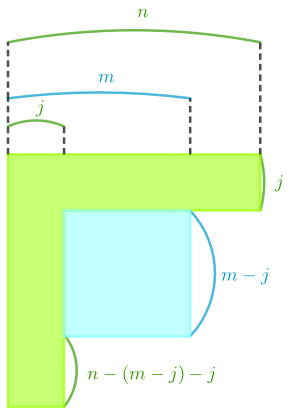}
	\end{figure}
	
	Sub-partition $P$ as such:
	\[P=:[n,\ n,...,\ n]+[m,\ m,...,\ m]+[j,\ j,...,\ j]=X+Y+Z,\]
	respectively. Suppose $X$ has $j$ parts, $Y$ has $m-j$ parts, and $Z$ has $n-(m-j)-j$ parts. Then $P$ is self-conjugate.\\
\end{lem}

\begin{proof}
	Let $P$ be as above. Remove the first (outer) $j$ rows and $j$ columns. You will be left with:
	\begin{enumerate}
		\item[$Q=:$] $[(n-n),\ (n-n),...,\ (n-n)]+[m-j,\ m-j,...,\ m-j]\\ +[(j-j),\ (j-j),...,\ (j-j)]$
		\item[]$=[\ ]+[m-j,\ m-j,...,\ m-j]+[\ ]$
		\item[]$=E+[m-j,\ m-j,...,\ m-j]+E$
		\item[]$=[m-j,\ m-j,...,\ m-j]$
	\end{enumerate}
	So, $Q$ is a Durfee square and is self-conjugate by Lemma \ref{Durfee}. The first  (outer) $j$ rows and $j$ columns were a fancy L of width $j$, which is self-conjugate by iterating Lemma \ref{L} $j$ times. Thus, the Young diagram of $P$ is symmetric, so $P$ is self-conjugate.\\
\end{proof}

\begin{lem} 
	\label{NonTrivTriangle}
	Suppose the first part of a partition $P$ is equal to the number of parts of $P$ and 
	\[P=[n,\ n,...,\ n,\ m,\ m-1,...,\ m-(m-j)+1,\ j,\ j,...,\ j].\]
	\begin{figure}[h]
		\centering
		\includegraphics[width=60mm]{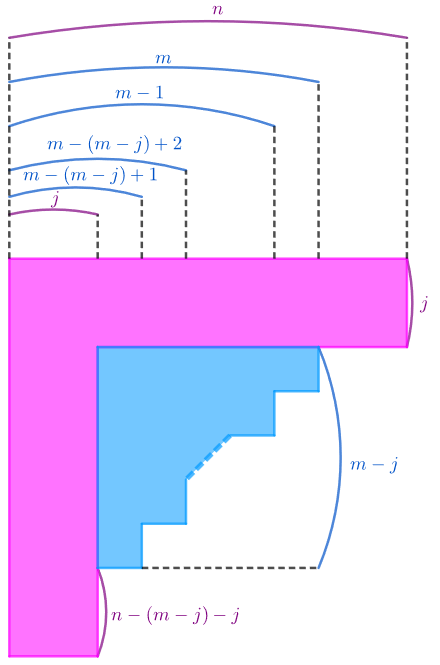}
	\end{figure}
	
	Sub-partition $P$ as such:
	\[P=:[n,\ n,...,\ n]+[m,\ m-1,...,\ m-(m-j)+1]+[j,\ j,...,\ j]=X+Y+Z,\]
	respectively. Suppose $X$ has $j$ parts, $Y$ has $m-j$ parts, and $Z$ has $n-(m-j)-j$ parts. Then $P$ is self-conjugate.\\
\end{lem}

\begin{proof}
	Let $P$ be as above. Remove the first (outer) $j$ rows and $j$ columns. Then we are left with the partition
	\begin{enumerate}
		\item[$Q$] $=[(n-n),\ (n-n),...,\ (n-n)] $
		\item[]$\ \ \ +[m-j,\ (m-1)-j,...,\ (m-(m-j)+1)-j]$
		\item[]$\ \ \ +[(j-j),\ (j-j),...,\ (j-j)]$
		\item[]$=[\ ]+[(m-j),\ (m-j)-1,...,\ ((m-j)-(m-j))+1]+[\ ]$
		\item[]$=E+[(m-j),\ (m-j)-1,...,\ ((m-j)-(m-j))+1]+E$
		\item[]$=[(m-j),\ (m-j)-1,...,\ (m-j)-((m-j)-1)]$
	\end{enumerate}
	So, $Q$ is a fancy triangle and is self-conjugate by Lemma \ref{Triangle}, and the (outer) $j$ rows and $j$ columns were a fancy L of width $j$, which is self-conjugate by iterating Lemma \ref{L} $j$ times. Thus, the Young diagram of $P$ is symmetric, so $P$ is self-conjugate.\\
\end{proof}

\subsection{The Nest-And-Egg Theorem}
\begin{thm} 
	\label{Nest-And-Egg}
	Given a nest, which is a series of fancy Ls of any width, and an egg that is either a Durfee square or a fancy triangle of any size $($or an empty egg, $E=[\ ])$, a partition composed of this nest and egg will always be self-conjugate.\\
\end{thm}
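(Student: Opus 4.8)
The plan is to proceed by induction on the number $N$ of fancy Ls that make up the nest, peeling off one fancy L at a time from the outside and reducing to the single-L lemmas already established.

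For the base case $N=0$, the partition $P$ consists of the egg alone. If the egg is a pure Durfee square, then $P$ is self-conjugate by Lemma \ref{Durfee}; if it is a fancy triangle, then $P$ is self-conjugate by Lemma \ref{Triangle}; and if the egg is empty, then $P=E=[\ ]$ is trivially self-conjugate. This covers every admissible egg.

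For the inductive step, suppose every partition composed of a nest of $N$ fancy Ls around a Durfee-square or fancy-triangle egg is self-conjugate, and let $P$ have a nest of $N+1$ fancy Ls around such an egg. Let $w$ be the width of the outermost fancy L. Removing the first $w$ rows and first $w$ columns of the Young diagram of $P$ --- exactly the operation performed in Lemmas \ref{NonTrivSquare} and \ref{NonTrivTriangle} --- deletes precisely this outer fancy L and leaves a partition $Q$ whose nest consists of the remaining $N$ fancy Ls around the same egg. By the induction hypothesis, $Q$ is self-conjugate, and the deleted outer fancy L of width $w$ is itself self-conjugate by iterating Lemma \ref{L} a total of $w$ times, as was done in the proofs of the non-trivial lemmas.

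The heart of the argument, and the step I expect to be the main obstacle to make fully rigorous, is to show that the symmetry of the outer fancy L together with the symmetry of $Q$ forces the symmetry of $P$. The key observation is that conjugation commutes with this peeling. Index the rows and columns of the Young diagram by $1,\dots,d$, where $d$ is the common value of the first part and the number of parts of $P$, and say box $(r,c)$ lies in $P$ when the $r$th part has length at least $c$. Because the outer frame is a full fancy L, the first $w$ rows have length $d$ and the first $w$ columns have height $d$, so the set $\{(r,c): r\le w \text{ or } c\le w\}$ of frame boxes is carried onto itself by the transpose $(r,c)\mapsto(c,r)$. For the remaining boxes, $(r,c)$ with $r,c>w$ lies in $P$ exactly when $(r-w,c-w)$ lies in $Q$; since $Q=Q'$, such a box $(r,c)$ is present in $P$ iff $(c,r)$ is present in $P$. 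Combining the frame symmetry with this interior symmetry yields $P=P'$, so $P$ is self-conjugate, completing the induction and the proof. The care needed here is precisely in verifying that the transpose of $P$ decomposes as the unchanged outer frame together with the transpose of the inner partition $Q$ --- a fact that is visually evident from the nested Young diagram of Figure \ref{figure8} but that should be recorded in terms of box coordinates to be airtight.
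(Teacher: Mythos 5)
Your proof is correct and follows essentially the same route as the paper's: peel the fancy Ls off the outside one layer at a time and reduce to the single-L lemmas (Lemmas \ref{NonTrivSquare} and \ref{NonTrivTriangle}), with the Durfee square, fancy triangle, or empty partition as the base case. The one substantive addition is your box-coordinate verification that transposition fixes the outer frame and commutes with the peeling; the paper's proof simply asserts that a symmetric nest around a symmetric egg is symmetric, so your argument makes explicit the step the paper leaves implicit.
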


\begin{proof}
	Suppose we have a nest and a trivial egg; the egg $K$ is either $K=[\ ]=E$, $K=[1]$, $K=[2,1]$, or $K=[2,2]$, and the rest of our Young diagram belongs to the nest (and thus, is some fancy L or a series of fancy Ls of any width). Since all Ls of the nest are symmetric by Lemma \ref{L} and all of these egg cases are trivially symmetric, the claim holds true.
	Suppose we have a nest and a non-trivial egg, such as a Durfee square or fancy triangle of dimension 3 or more. By Lemma \ref{NonTrivSquare} and Lemma \ref{NonTrivTriangle}, the claim holds true, since removing enough outer fancy Ls from our Young diagram will leave us with one of these cases. Thus, the theorem holds true in all cases.\\
\end{proof}

\section{The Main Theorem} \label{Sec6}

With all of the aforementioned lemmas and theorems, we finally wish to prove the main theorem: the Self-Conjugate Partition Theorem. This theorem will allow us to take any partition, without its Young diagram, and determine if it is self-conjugate, even if it does not follow any of the forms or patterns above. Although the statement of the theorem and its proof may appear complicated, it will show us a simple way to determine whether a partition is self-conjugate by only using clever counting skills and summation.

However, we note that the $x_i$-values in the next theorem represent the \textit{multiplicity} of parts (how many times a part repeats in the partition) and not the value or index of the parts themselves. Figure \ref{figure9} will be a useful illustration for the proof of the theorem.

\begin{figure}[h]
	\centering
	\includegraphics[width=65mm]{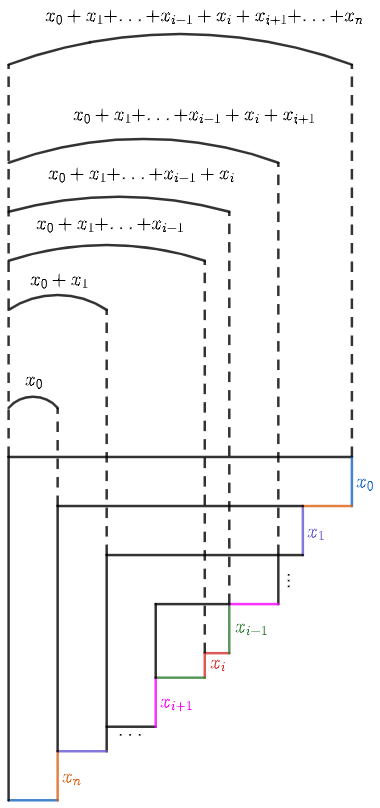}
	\caption{The Young diagram of a self-conjugate partition $P$.}
	\label{figure9}
\end{figure}

\begin{thm} [The Self-Conjugate Partition Theorem]
	A partition $P$ is self-conjugate if and only if it can be written as the following:
	\[P=\left[\sum_{i=0}^{n}x_i,\ \sum_{i=0}^{n}x_i,...,\sum_{i=0}^{n}x_i,\ \sum_{i=0}^{n-1}x_i,\ \sum_{i=0}^{n-1}x_i,...,\sum_{i=0}^{n-1}x_i,...,\sum_{i=0}^{0}x_i,\ \sum_{i=0}^{0}x_i,...,\sum_{i=0}^{0}x_i,\right]\]
	\[=\left[\sum_{i=0}^{n}x_i,\ \sum_{i=0}^{n}x_i,...,\sum_{i=0}^{n}x_i\right]+\left[\sum_{i=0}^{n-1}x_i,\ \sum_{i=0}^{n-1}x_i,...,\sum_{i=0}^{n-1}x_i\right]+...+\left[\sum_{i=0}^{0}x_i,\ \sum_{i=0}^{0}x_i,...,\sum_{i=0}^{0}x_i\right]\]
	\[=: Y_n+Y_{n-1}+\cdots+Y_0,\]
	respectively, where $P$ has $\sum_{i=0}^{n}x_i$ parts, $Y_n$ has $x_0$ parts, $Y_{n-1}$ has $x_1$ parts, ..., $Y_0$ has $x_n$ parts, and all $x_i\in\N$.
\end{thm}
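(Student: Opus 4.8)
The plan is to prove both directions by decomposing a square $d\times d$ Young diagram into nested ``hooks'' (fancy Ls) centered on the main diagonal, and to show that the multiplicity structure described in the statement corresponds exactly to choosing the widths of these hooks. The quantity $\sum_{i=0}^{n}x_i$ is the first part (hence the dimension $d$ by Lemma \ref{Equality}), and each $x_k$ records how many rows share the same ``depth.'' The key observation is that the stated form is precisely a nest of fancy Ls (the outermost L has width $x_0$, the next $x_1$, and so on) wrapped around an egg, so the two directions amount to reading Theorem \ref{Nest-And-Egg} forwards and backwards.

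For the reverse direction ($\Leftarrow$), I would assume $P$ has the displayed form and exhibit it as a nest-and-egg configuration. The first $x_0$ parts equal $\sum_{i=0}^{n}x_i$, the next $x_1$ parts equal $\sum_{i=0}^{n-1}x_i$, etc.; removing the outer $x_0$ rows and $x_0$ columns strips off a fancy L of width $x_0$ (symmetric by iterating Lemma \ref{L}) and leaves a partition of the same shape with $n$ replaced by $n-1$. Iterating, each stage peels off a fancy L of width $x_k$ until only the innermost block $Y_0 = [x_n$-many copies of $x_n]$ remains, which is a pure Durfee square, self-conjugate by Lemma \ref{Durfee}. By Theorem \ref{Nest-And-Egg} the whole diagram is symmetric, so $P=P'$.

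For the forward direction ($\Rightarrow$), I would assume $P$ is self-conjugate and construct the $x_i$. By Lemma \ref{Equality} the first part equals the number of parts, call it $d$; set $x_0$ to be the multiplicity of the part $d$, i.e.\ the number of rows of full length $d$. Self-conjugacy forces exactly $x_0$ columns of full height as well, so deleting these $x_0$ outer rows and columns (a fancy L of width $x_0$) yields a strictly smaller self-conjugate partition $Q$, by the symmetry established in Theorem \ref{Piecewise} and the hook decomposition. Defining $x_1$ as the multiplicity of the top part of $Q$ and recursing, I obtain nonnegative integers $x_0,\dots,x_n$ whose partial sums reconstruct the parts of $P$ exactly as displayed; the process terminates because each step removes at least one box from the diagonal.

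The main obstacle is making the peeling step rigorous in the forward direction: I must verify that self-conjugacy of $P$ actually guarantees that the outer L is ``square'' (equal arm and leg lengths) so that removing it leaves a genuine partition that is again self-conjugate, rather than merely asserting it from the picture. The cleanest way to close this gap is to use the involution $f=f^{-1}$ from the Proposition following Theorem \ref{Piecewise}: the condition $f(x)=f^{-1}(x)$ says the diagram is invariant under reflection across $g(x)=x$, which immediately forces the multiplicity of the largest part to equal the size of the longest run from the diagonal, and this invariance is preserved when the symmetric outer hook is removed. Everything else is bookkeeping with the partial sums $\sum_{i=0}^{k}x_i$.
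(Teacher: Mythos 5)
Your reverse direction ($\Leftarrow$) is essentially the paper's: peel off a fancy L of width $x_0$, observe that the remaining diagram has the same form with a shorter multiplicity sequence, and iterate until Theorem \ref{Nest-And-Egg} (via Lemmas \ref{L} and \ref{Durfee}) gives symmetry. Your forward direction ($\Rightarrow$), however, takes a genuinely different route. The paper does not peel at all in that direction: it groups $P$ into blocks $Y_n,\dots,Y_0$ of equal parts and reads \emph{all} of the relations $\ell(Y_n)=z_0$, $\ell(Y_{n-1})=z_1-z_0$, $\dots$ in one shot from Theorem \ref{Piecewise} (the staircase boundary must agree with its reflection across $g(x)=x$), then sets $x_i$ equal to the multiplicities and obtains $z_k=\sum_{i=0}^{k}x_i$ directly. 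You instead define $x_0$ as the multiplicity of the largest part, argue that self-conjugacy forces the outer hook to be balanced, remove it, and recurse. Both work. Your version buys a cleaner logical skeleton --- the single lemma ``removing the outer principal hook of a self-conjugate partition leaves a self-conjugate partition'' does all the work, and you correctly identify that this hook-removal lemma is the point that must actually be proved (the reflection-invariance argument via $f=f^{-1}$ closes it). The cost is extra bookkeeping: you must check that the recursively defined $x_k$ agree with the multiplicities of the distinct parts of the original $P$, and that the surviving number of parts at each stage produces the partial sums $\sum_{i=0}^{k}x_i$ as the part values. The paper's approach avoids the recursion but leans on the somewhat informal Theorem \ref{Piecewise} for the simultaneous boundary-matching.

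One bookkeeping slip worth fixing: each peel consumes \emph{both} the outermost block $Y_n$ and the innermost block $Y_0$ (the bottom $x_n$ parts all equal $\sum_{i=0}^{0}x_i=x_0$, so they shrink to nothing when the first $x_0$ columns are removed). So the terminal egg is not $Y_0$; it is either empty (when $n$ is odd) or the middle block, which is $x_{n/2}$ copies of $x_{n/2}$ (when $n$ is even). The conclusion is unaffected --- the egg is still a pure Durfee square or empty, which is exactly what Theorem \ref{Nest-And-Egg} requires --- but as written your description of what remains after the iteration is incorrect.
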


\begin{proof}$\ $
	\begin{enumerate}
		\item[ ]
		Suppose a partition $P$ is self-conjugate. We write
		\[P=[z_n,\ z_n,\ ...,\ z_n,\ z_{n-1},\ z_{n-1},\ ...,\ z_{n-1},\ ...,\ z_0,\ z_0,\ ...,\ z_0]\]
		\[=[z_n,\ z_n,\ ...,\ z_n]+[z_{n-1},\ z_{n-1},\ ...,\ z_{n-1}]+...+[z_0,\ z_0,\ ...,\ z_0]\]
		\[=:Y_n+Y_{n-1}+\cdots+Y_0,\]
		where $P$ has $z_n$ parts. It must be that $Y_n$ has $z_0$ parts, lest we contradict Theorem \ref{Piecewise}, since $P$ is self-conjugate, and the boundaries of our piecewise function have to be equal on either side of $g(x)=x$ (see Figure \ref{figure9}). Likewise, $Y_{n-1}$ has $z_1-z_0$ parts, lest we contradict Theorem \ref{Piecewise}. Continuing inductively, we find that
		\[\ell(Y_n)=z_0,\ \ell(Y_{n-1})=z_1-z_0,\ \ell(Y_{n-2})=z_2-z_1,\ ...,\ \ell(Y_0)=z_n-z_{n-1}.\ \]
		
		Let $x_i$ denote the number of parts of size $z_{n-1}$ in $P$, for all $0\leq i \leq n$. Again, by Theorem \ref{Piecewise}, since the boundaries must match, we have that
		\[z_n=\sum_{i=0}^{n}x_i,\ z_{n-1}=\sum_{i=0}^{n-1}x_i,\ ..., \ z_0=\sum_{i=0}^{0}x_i. \]
		
		Thus, if $P$ is a self-conjugate partition, it can be written as the above partition.
		
		\item[ ]
		Conversely, suppose we have the partition
		\[P=\left[\sum_{i=0}^{n}x_i,\ \sum_{i=0}^{n}x_i,...,\sum_{i=0}^{n}x_i,\ \sum_{i=0}^{n-1}x_i,\ \sum_{i=0}^{n-1}x_i,...,\sum_{i=0}^{n-1}x_i,...,\sum_{i=0}^{0}x_i,\ \sum_{i=0}^{0}x_i,...,\sum_{i=0}^{0}x_i,\right].\] 
		Remove the first $x_0$ (outer) rows and columns. We are left with:
		
		\[Q=\left[\left(\sum_{i=0}^{n}x_i\right)-\left(\sum_{i=0}^{n}x_i\right),\left(\sum_{i=0}^{n}x_i\right)-\left(\sum_{i=0}^{n}x_i\right),...,\left(\sum_{i=0}^{n}x_i\right)-\left(\sum_{i=0}^{n}x_i\right)\right]\]
		
		\[+\left[\left(\sum_{i=0}^{n-1}x_i\right)-x_0,\ \left(\sum_{i=0}^{n-1}x_i\right)-x_0,...,\left(\sum_{i=0}^{n-1}x_i\right)-x_0\right]+\cdots\]
		
		
		\[=[\ ]+\left[\sum_{i=1}^{n-1}x_i,\sum_{i=1}^{n-1}x_i,...,\sum_{i=1}^{n-1}x_i\right]+\cdots+\left[\sum_{i=1}^{1}x_i,\sum_{i=1}^{1}x_i,...,\sum_{i=1}^{1}x_i\right]+[\ ]\]
		
		\[=\left[\sum_{i=1}^{n-1}x_i,\sum_{i=1}^{n-1}x_i,...,\sum_{i=1}^{n-1}x_i\right]+\left[\sum_{i=1}^{n-2}x_i,\sum_{i=1}^{n-2}x_i,...,\sum_{i=1}^{n-2}x_i\right]+\cdots+\left[\sum_{i=1}^{1}x_i,\sum_{i=1}^{1}x_i,...,\sum_{i=1}^{1}x_i\right].\]
		Let $j=i-1$ and $m=n-1$. Then,
		\[Q=\left[\sum_{j=0}^{m}x_j,\sum_{j=0}^{m}x_j,...,\sum_{j=0}^{m}x_j\right]+\left[\sum_{j=0}^{m-1}x_j,\sum_{j=0}^{m-1}x_j,...,\sum_{j=0}^{m-1}x_j\right]+\cdots+\left[\sum_{j=0}^{0}x_j,\sum_{j=0}^{0}x_j,...,\sum_{j=0}^{0}x_j\right].\]
		
		We are now back where we started. Continue this process inductively, removing every next outer set of $x_k$ rows and columns, for $k\in\{1,2,...,n\}$. This will peel back our nest, and by Theorem \ref{Nest-And-Egg}, we will eventually get a symmetric egg. This is because every layer we peel back is a fancy L, and the egg itself is either empty or made of fancy Ls (Durfee square, fancy triangle). Alternatively, assume the supposition above and one or more parts of the partition makes the partition a non-self-conjugate; this will lead to a contradiction because of our supposition on the multiplicity of parts. Thus, $P$ is a self-conjugate partition.
	\end{enumerate}
\end{proof}

Now, let us go through a few examples and apply this theorem to see if various partitions are self-conjugate.

\begin{ex}
	Let $P=[5,5,3,2,2]$.
	We see that $x_0=2$, $x_1=1$, $x_2=2$, since 5 appears twice, 3 once, and 2 twice.
	We have
	\[5=\sum_{i=0}^{2}x_i=2+1+2,\ \ 3=\sum_{i=0}^{1}x_i=2+1,\ \ 2=\sum_{i=0}^{0}x_i=2,\]
	so $P$ is self-conjugate.\\
\end{ex}

\begin{ex}
	Let $Q=[20,7,7,7,7,7,6,6,5,5,5,5,4,4,2,2,1,1,1,1]$. We see that $x_0=1$, $x_1=5$, $x_2=2$, $x_3=4$, $x_4=2$, $x_5=2$, and $x_6=4$. We have
	\[\sum_{i=0}^{6}x_i=1+5+2+4+2+2+4=20,\ \ \sum_{i=0}^{5}x_i=1+5+2+4+2+2=16\neq7,\]
	so $Q$ is not self-conjugate.\\
\end{ex}

\begin{ex}
	Let $R=[9,9,7,7,4,4,4,2,2]$. We see that $x_0=2$, $x_1=2$, $x_2=3$, $x_3=2$. We have
	\[\sum_{i=0}^{3}x_i=2+2+3+2=9,\ \ \sum_{i=0}^{2}x_i=2+2+3=7,\ \ \sum_{i=0}^{1}x_i=2+2=4, \ \ \sum_{i=0}^{0}x_i=2,\]
	so $R$ is self-conjugate.\\\\
\end{ex}

\end{document}